\newtheorem{thm}{Theorem}[section]
\newtheorem{cor}[thm]{Corollary}
\newtheorem{lem}[thm]{Lemma}
\newtheorem{prop}[thm]{Proposition}
\theoremstyle{definition}
\newtheorem{defn}[thm]{Definition}
\theoremstyle{remark}
\newtheorem{rem}[thm]{Remark}
\numberwithin{equation}{section}
\newcommand{\R}{\mathbb R}
\newcommand{\eps}{\epsilon}
\newcommand{\p}{\partial}
\newcommand{\comment}[1]{}
\begin{document}

\title[]{Thin one-phase almost minimizers}
\author{D. De Silva}
\author{O. Savin}


\begin{abstract}We consider almost minimizer to the thin-one phase energy functional and we prove optimal regularity of the solution and partial regularity of the free boundary. We recover the theory for energy minimizers developed in \cite{DR, DS1}. Our methods are based on a noninfinitesimal notion of viscosity solutions we introduced in \cite{DS4}. \end{abstract}
\maketitle

\section{Introduction}

The purpose of this paper is the study of almost minimizers of the so-called {\it thin one-phase} energy functional, that is
 \begin{equation} \label{E} E(u,\Omega) := \int_\Omega |\nabla u|^2 dX + \, \mathcal{H}^n(\{(x,0) \in  \Omega : u(x,0)>0\}),\end{equation}
where $\Omega$ is a bounded domain in $\R^{n+1} = \R^n \times \R$ and points in $\R^{n+1}$ are denoted by $X=(x,x_{n+1})$. 

The minimization problem for $E$ was first considered by Caffarelli, Roquejoffre and Sire in \cite{CRS}, as a model of a Bernoulli type free boundary problem in the context of the fractional Laplacian. When $n=2$, this problem is related to models involving traveling wave solutions for planar cracks. In this setting the slit $\{u(x,0)=0\}$ represents the location of the crack in a 3D material and the free boundary $\p \{u(x,0)>0\}$ is one-dimensional and represents the edge of the crack. 

The study of the regularity of thin one-phase free boundaries was initiated by the first author and Roquejoffre in \cite{DR}, where it was shown that ``flat" free boundaries are $C^{1,\alpha}$ via a viscosity approach.  In \cite{DS1,DS2,DS3} we investigated further properties of minimizers by combining variational and nonvariational techniques. We showed that Lipschitz free boundaries are of class $C^\infty$ and local minimizers of $E$ have smooth free boundary except possibly for a small singular set of Hausdorff dimension $n-3$. Thus, the main regularity results for the classical one-phase Alt-Caffarelli energy functional have been extended to the thin setting \cite{AC, C1, C2, KNS}. 

In this paper we develop the regularity theory for almost minimizers of $E$ and their free boundaries. 
Almost minimizers of the Alt-Caffarelli functional were investigated recently by David and Toro, and David, Engelstein and Toro in \cite{DaT, DaET}. 
Our strategy differs from the one in \cite{DaT, DaET}. It is inspired by our recent work \cite{DS4} in which we develop a Harnack type inequality for functions that do not necessarily satisfy an infinitesimal equation but rather exhibit a two-scale behavior. As an application, we provide in \cite{DS4} a non-variational proof of of the $C^{1,\alpha}$ estimates of Almgren and Tamanini \cite{A, T} for quasi-minimizers of the perimeter functional. We follow here a similar approach, by showing that almost minimizers of $E$ are ``viscosity solutions" in this more general sense (see Subsection 3.5). Roughly, our viscosity solutions satisfy comparison in a neighborhood of a touching point whose size depends on the properties of the test functions.Once this is established, then we employ the techniques developed in \cite{DR, DS1, DS2} to study the regularity of the free boundary of viscosity solutions.

Before stating our main results, we recall the definition of almost minimizers (see \cite{G} for a comprehensive treatment of almost minimizers of regular functionals of the calculus of variations.)

\begin{defn}\label{almost_E} We say that $u$ is an almost minimizer for $E$ in $\Omega$ (with constant $\kappa$ and exponent $\beta$) if $u \in H^1(\Omega)$, $u \geq 0$ a.e. in $\Omega$, and
\begin{equation}\label{almost_min}
E(u, B_r(X)) \leq (1+ \kappa r^\beta) E(v, B_r(X))
\end{equation}
for every ball $B_r(X)$ such that $\overline{B_r(X)} \subset \Omega$ and every $v \in H^1(\Omega)$ such that $v=u$ on $\p B_r(X)$ in the trace sense.
\end{defn}

Our first theorem concerning the optimal $C^{0,1/2}$ regularity holds for a slightly weaker class than almost minimizers. For simplicity, we state it here for almost minimizers and we refer the reader to Theorem \ref{weak_main_thin} for the more general result.

\begin{thm}\label{intro_main_thin} Let $u$ be an almost minimizer for $E$ in $B_1$ with constant $\kappa$ and exponent $\beta$. Then
 $$\|u\|_{C^{0,1/2}(B_{1/2})} \le C (\|u\|_{H^1(B_1)} +1)$$ for some constant $C$ depending on $\kappa$, $\beta$ and $n$.
Moreover, $u$ has uniformly bounded $C^{0,1/2}$ semi-norm in a small ball $B_{r_0}$, 
with $r_0$ depending on $\kappa, \beta, n$ and $\|u\|_{H^1(B_1)}$.
\end{thm}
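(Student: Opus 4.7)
The plan is to prove a Campanato-type decay estimate for the localized Dirichlet energy
\[
\phi(X_0,r) := \int_{B_r(X_0)} |\nabla u|^2\,dX,
\]
on balls $B_r(X_0)\subset B_1$, with the target being $\phi(X_0,r) \le C\, r^n$ uniformly in $X_0 \in B_{1/2}$ for all sufficiently small $r$. Once this Morrey-type bound is in hand, the $C^{0,1/2}$ Hölder semi-norm in $B_{1/2}$ is immediate from the Campanato embedding on $\R^{n+1}$ with exponents $p=2$, $\alpha = 1/2$; the full $C^{0,1/2}$ norm then follows by adding the crude $L^2$-based $L^\infty$ bound that comes from the $H^1$ control of $u$.

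Fix $X_0 \in B_{1/2}$ and $r\le 1/2$, and let $v$ be the harmonic replacement of $u$ in $B_r:=B_r(X_0)$, i.e.\ the harmonic function with $v=u$ on $\partial B_r$. Since $u \ge 0$, the maximum principle gives $v\ge 0$, so $v$ is admissible in \eqref{almost_min}. Harmonicity yields the orthogonality $\int_{B_r}|\nabla u|^2 = \int_{B_r}|\nabla v|^2 + \int_{B_r}|\nabla(u-v)|^2$, and the trivial bound $\mathcal{H}^n(\{v(\cdot,0)>0\}\cap B_r) \le C r^n$ controls the thin term on the competitor side. Feeding these into \eqref{almost_min} and rearranging produces the key comparison
\[
\int_{B_r} |\nabla(u-v)|^2 \;\le\; C\kappa r^\beta \int_{B_r}|\nabla u|^2 + C r^n.
\]
Combined with the harmonic estimate $\int_{B_\rho}|\nabla v|^2 \le (\rho/r)^{n+1}\int_{B_r}|\nabla v|^2$, a consequence of the subharmonicity of $|\nabla v|^2$, this yields the decay relation
\[
\phi(X_0,\rho) \;\le\; \bigl(2(\rho/r)^{n+1} + C\kappa r^\beta\bigr)\phi(X_0,r) + C r^n, \qquad 0<\rho\le r.
\]

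Setting $\rho = \tau r$ for a small universal $\tau$ with $2\tau^{n+1} \le \tfrac14 \tau^n$, and imposing the smallness condition $C\kappa r^\beta \le \tfrac14 \tau^n$ (which holds on a range $r \le r_*(\kappa,\beta,n)$), a standard iteration on $\Psi_k := \phi(X_0,\tau^k r_*)/(\tau^k r_*)^n$ gives $\Psi_{k+1}\le \tfrac12 \Psi_k + C'$. Hence $\Psi_k \le \max(\Psi_0, 2C')$ for all $k$, so $\phi(X_0,r) \le C(\Psi_0 + 1) r^n$ for $r \le r_*$. Since $\Psi_0 \le \|u\|_{H^1(B_1)}^2/r_*^n$, Campanato's embedding yields the first conclusion of the theorem. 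For the second conclusion: once $k$ is large enough that $\Psi_0/2^k \le C'$, one has $\Psi_k \le 3C'$, universal, and hence a uniform $C^{0,1/2}$ bound on $B_{\tau^k r_*}$. The required $k$, and hence the threshold radius $r_0$, depends on $\|u\|_{H^1(B_1)}$, exactly as stated.

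The main delicate point I expect is the correct bookkeeping of the two distinct scalings present in $E$, the bulk Dirichlet energy scaling like $r^{n+1}$ against the thin surface term scaling like $r^n$. This mismatch is precisely what drives the iteration and forces the sharp Hölder exponent $1/2$; any looser handling of the two terms either loses the exponent or prevents the decay estimate from closing. A secondary remark is that the only structural property of $u$ used is the inequality $E(u,B_r) \le (1+\kappa r^\beta) E(v,B_r)$ against the harmonic replacement $v$, which suggests that the argument extends cleanly to the broader class referred to in Theorem \ref{weak_main_thin}.
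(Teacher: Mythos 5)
Your argument is correct and follows essentially the same route as the paper: comparison with the harmonic replacement, absorbing the thin measure term as a $Cr^n$ error, a dyadic decay iteration for the scale-invariant quantity $r^{-n}\int_{B_r}|\nabla u|^2$ (which also only uses the weaker hypothesis of Theorem \ref{weak_main_thin}), and then Morrey--Campanato. The paper merely packages the iteration as a dichotomy (Proposition \ref{dich_thin}) and refines the harmonic decay step by subtracting $q=\nabla v(0)$, which is an organizational difference rather than a different method.
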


Furthermore we extend the main result in \cite{DR} concerning the regularity of the free boundary
$$F(u):=\p_{\R^n} \{x| \quad  u(x,0)>0\}, $$ to the context of almost minimizers. Precisely, we prove an improvement of flatness theorem (see Theorem \ref{flat_thin}), from which the following main regularity result follows.

\begin{thm}\label{thm_intro}
Let $u$ be an almost minimizer to $E$ in $B_1$ with constant $\kappa$ and exponent $\beta$. Then
$$\mathcal H^{n-1}(F(u) \cap B_{1/2}) \le C(\beta, \kappa, n),$$
and $F(u)$ is $C^{1,\alpha}$ regular outside a closed singular set of Hausdorff dimension $n-3$, for some $\alpha(\beta,n)>0$ small. 
\end{thm}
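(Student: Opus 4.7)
The plan is to transfer the regularity theory for energy minimizers developed in \cite{DR, DS1, DS2, DS3} to the almost minimizer setting by viewing $u$ as a viscosity solution in the non-infinitesimal sense of \cite{DS4}: the factor $1+\kappa r^\beta$ in \eqref{almost_min} vanishes under rescaling, so at small scales $u$ behaves like an exact minimizer, and the geometric tools from the minimizer theory can be ported over after a careful compactness argument.

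First I would establish uniform non-degeneracy along the free boundary. Comparing $u$ on a small ball $B_r(X_0)$ centered at $X_0\in F(u)$ with its harmonic replacement (and with truncations of the form $\max(u,\eps)$), the inequality \eqref{almost_min} together with the $C^{0,1/2}$ bound from Theorem \ref{intro_main_thin} yields $\sup_{B_r(X_0)} u\geq c\, r^{1/2}$ once the perturbation $\kappa r^\beta$ is absorbed for $r$ small enough. Combined with the analogous density lower bounds for $\{u(\cdot,0)>0\}$ and $\{u(\cdot,0)=0\}$ obtained from competitor comparisons, this gives the bound $\mathcal H^{n-1}(F(u)\cap B_{1/2})\leq C$ via a Vitali covering on the free boundary.

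For the $C^{1,\alpha}$ regularity I would analyze blow-ups $u_{X_0,r}(X):=r^{-1/2} u(X_0+rX)$ at a free boundary point $X_0$. The $C^{0,1/2}$ estimate provides local compactness in $C^{0,\alpha}_{\mathrm{loc}}$ for any $\alpha<1/2$, and since the almost minimality factor $1+\kappa(rr')^\beta$ on a ball of scaled radius $r'$ tends to $1$, any limit $u_0$ is a global minimizer of the thin one-phase energy in the sense of \cite{DS1}. The classification of such minimizers splits $F(u)$ into a regular part (where every blow-up is a half-plane solution) and a singular part. At regular points the improvement of flatness Theorem \ref{flat_thin} applies, and its dyadic iteration yields $C^{1,\alpha}$ regularity of $F(u)$ in a neighborhood. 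A Federer-type dimension reduction argument, invoking the absence of non-flat minimizing cones below the critical dimension from \cite{DS3}, bounds the Hausdorff dimension of the singular set by $n-3$.

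The main obstacle is making the blow-up argument rigorous: one needs convergence of both the Dirichlet energy and the $\mathcal H^n$-measure of the positivity set of the trace, so that limits of almost minimizers are honest minimizers (and not merely solutions of the infinitesimal Euler equations). This is precisely where the non-infinitesimal viscosity framework of Subsection 3.5 is essential: rather than proving full $H^1$ and measure-theoretic convergence of $u_{X_0,r_k}$, one propagates only the geometric information required by the improvement of flatness scheme via comparison with test functions on balls of size tailored to the error $\kappa r^\beta$, in the spirit of the quasi-minimizer argument for the perimeter functional in \cite{DS4}.
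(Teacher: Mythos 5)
Your overall route is the same as the paper's: blow-ups of almost minimizers converge to global minimizing cones, the improvement of flatness Theorem \ref{flat_thin} handles points where a blow-up is the trivial cone, and a dimension reduction in the style of \cite{DS1} bounds the singular set by $n-3$. Two steps, however, are genuinely gapped as written. First, your argument for $\mathcal H^{n-1}(F(u)\cap B_{1/2})\le C$ does not work: two-sided density estimates for $\{u(\cdot,0)>0\}$ and $\{u(\cdot,0)=0\}$ at free boundary points, combined with a Vitali covering, only show that $F(u)$ is porous, i.e.\ has zero $\mathcal H^n$-measure (and Minkowski dimension strictly below $n$); they give no control on the $(n-1)$-dimensional measure, since the thin energy contains only the bulk term $\mathcal H^n(\{u(\cdot,0)>0\})$ and no perimeter term that a competitor comparison could convert into an $\mathcal H^{n-1}$ bound. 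The finite $\mathcal H^{n-1}$ estimate is a nontrivial part of the minimizer theory in \cite{DS1}, and in the paper it is obtained by checking that that specific argument carries over once Proposition \ref{blowup} and Theorem \ref{flat_thin} are available; you need to invoke (and verify the transfer of) that argument rather than a covering of density balls.

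Second, the dimension reduction requires blow-up limits to be \emph{cones}, i.e.\ homogeneous of degree $1/2$, not merely global minimizers: your compactness argument (uniform $C^{0,1/2}$ bounds, non-degeneracy, and $\kappa r^\beta\to 0$) yields a global minimizer but says nothing about homogeneity. The paper gets homogeneity from a Weiss-type monotonicity formula (Theorem 4.1 in \cite{DS1}) adapted to almost minimizers, which is exactly the content of Proposition \ref{blowup}; this step should be stated explicitly, since Federer reduction collapses without it. Two smaller points: the classification of low-dimensional minimizing cones used for the $n-3$ bound is from \cite{DS1} rather than \cite{DS3}; and the non-infinitesimal viscosity framework is not a substitute for blow-up convergence --- the paper does prove honest convergence to minimizing cones --- but is the tool used inside the proof of the improvement of flatness lemma (comparison with the sub/supersolutions $V_{M,\xi',a}$ of Lemmas \ref{l613} and \ref{l61}), applied directly to $u$ at small scales.
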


Our strategy also allows us to obtain $C^{1,\alpha}$ regularity of Lipschitz free boundaries via the arguments of \cite{DS1} (see Theorem \ref{thm2}).

The paper is organized as follows. In Section 2 we prove the optimal regularity for almost minimizers and non-degeneracy properties. The following section is devoted to the ``flatness implies $C^{1,\alpha}$" result. Section 4 deals with the linearized problem associated to our flatness theorem. Finally, in the last section, we collect the regularity results that follow by standard arguments on the basis of the theory developed in the previous sections.

\section{$C^{0,1/2}$ regularity and non-degeneracy of thin almost minimizers}

In this section we start our study of {\it thin almost minimizers}, that is almost minimizers to the thin one-phase energy functional defined in \eqref{E}. 

\smallskip

{\bf Notation.} Recall that $X=(x,x_{n+1}), x \in \R^n$ are points in $\R^{n+1}$. We denote by $B_r$ a $n+1$ dimensional ball of radius $r$, while we denote by $\mathcal B_r := B_r \cap \{x_{n+1}=0\}.$ Also, $B_r^+ := B_r \cap \{x_{n+1}>0\}.$

\smallskip

\subsection{$C^{0,1/2}$ regularity.}The purpose of this subsection is to obtain $C^{0,1/2}$ regularity of thin almost minimizers. In fact, we do not need to require that $u$ is an almost minimizer but we can weaken our assumption (see statement of Theorem \ref{weak_main_thin}.)

First we remark that the energy $E$ scales according to the $C^{0,1/2}$ rescaling
$$ u_\rho (X) := \rho^{-1/2} u(\rho X) \quad \Longrightarrow \quad E ( u_\rho, B_r)=\rho^{-n}E(u, B_{\rho r}).$$
If $u$ is an almost minimizer with constant $\kappa$ and exponent $\beta$, then $\tilde u$ is an almost minimizer with constant $\kappa \rho^\beta$ and exponent $\beta$. Thus, after an initial dilation we may assume that the constant $\kappa$ is sufficiently small.  

Our first result is the following dichotomy. From here on, constants depending only on $n$ are called universal, and they may change from line to line in the body of a proof. Recall also that the function $u$ is non-negative.

\begin{prop}\label{dich_thin} Let $u \in H^1(B_1)$ and assume that
\begin{equation}\label{first}E(u,B_1) \leq (1+\sigma) E(v,B_1)\end{equation}
for all $v \in H^1(B_1)$ such that $v=u$ on $\p B_1$ (in the trace sense.)
Denote by
\begin{equation}\label{a}a : = \left(\fint_{B_1} |\nabla u|^2 \ dX\right)^{1/2}.\end{equation}
There exist universal constants $\eta, M, \sigma_0>0$ such that if $\sigma \leq \sigma_0$  then the following dichotomy holds. Either
\begin{equation}
a \leq M
\end{equation}
or 
\begin{equation}\label{alt1}\left(\eta \fint_{B_{\eta}} |\nabla u|^2 \ dX\right)^{1/2} \leq \frac a 2. \end{equation}
\end{prop}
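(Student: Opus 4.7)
The plan is to test the almost minimality \eqref{first} against the harmonic replacement $v$ of $u$ in $B_1$ (so $v$ is harmonic in $B_1$ with $v=u$ on $\partial B_1$ in the trace sense, and $v\ge 0$ by the maximum principle since $u\ge 0$). Since the surface terms $\mathcal H^n(\{u(\cdot,0)>0\}\cap\mathcal B_1)$ and $\mathcal H^n(\{v(\cdot,0)>0\}\cap\mathcal B_1)$ are both dominated by the universal constant $\omega_n:=\mathcal H^n(\mathcal B_1)$, the inequality \eqref{first} yields
\begin{equation*}
\int_{B_1} |\nabla u|^2 \;\le\; (1+\sigma)\int_{B_1} |\nabla v|^2 \;+\; (1+\sigma)\,\omega_n.
\end{equation*}
Combining this with the orthogonality identity $\int_{B_1}|\nabla u|^2=\int_{B_1}|\nabla v|^2+\int_{B_1}|\nabla(u-v)|^2$ and the Dirichlet minimality $\int_{B_1}|\nabla v|^2\le \int_{B_1}|\nabla u|^2=a^2|B_1|$, one obtains (for $\sigma\le 1$) the basic energy comparison
\begin{equation*}
\int_{B_1}|\nabla(u-v)|^2 \;\le\; \sigma\, a^2\,|B_1| \;+\; 2\omega_n.
\end{equation*}

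Next I would exploit that each $\partial_i v$ is harmonic, so $|\nabla v|^2$ is subharmonic in $B_1$; the mean value inequality then gives $\sup_{B_{1/2}}|\nabla v|^2 \le C \fint_{B_1}|\nabla v|^2 \le C\, a^2$ for a universal constant $C$. Using the pointwise bound $|\nabla u|^2\le 2|\nabla v|^2+2|\nabla(u-v)|^2$ and integrating over $B_\eta$ with $\eta\le 1/2$, I obtain
\begin{equation*}
\eta\fint_{B_\eta}|\nabla u|^2 \;\le\; 2C\,\eta\,a^2 \;+\; \frac{2}{\eta^{n}|B_1|}\Bigl(\sigma\, a^2\,|B_1| \;+\; 2\omega_n\Bigr).
\end{equation*}

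The remainder is a constant chase. Choose $\eta$ small universal so that $2C\eta\le 1/16$, then $\sigma_0$ small universal so that $2\sigma_0/\eta^{n}\le 1/16$, and set $M^2:=32\,\omega_n/(\eta^{n}|B_1|)$. Under the alternative $a>M$ the residual term $4\omega_n/(\eta^{n}|B_1|)$ is bounded by $a^2/8$, and summing the three contributions yields $\eta\fint_{B_\eta}|\nabla u|^2\le a^2/4$, which is \eqref{alt1}. The main conceptual point -- and the reason the dichotomy is forced -- is that the surface term $\omega_n$ is a fixed universal constant, so it can be absorbed into $\sigma a^2$ only once $a$ is itself of universal size; when $a$ is small the harmonic competitor does not give any useful gradient improvement, which is exactly the other branch of the dichotomy.
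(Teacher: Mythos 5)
Your proposal is correct and follows essentially the same route as the paper: harmonic replacement in $B_1$, the energy comparison $\int_{B_1}|\nabla(u-v)|^2\le \sigma a^2|B_1|+C$, the subharmonicity bound $|\nabla v|\le C a$ in $B_{1/2}$, and the external factor $\eta$ providing the gain, with the dichotomy decided by whether the universal additive constant is below a fixed fraction of $a^2$. The only cosmetic difference is that the paper additionally freezes $q=\nabla v(0)$ and uses $\fint_{B_\eta}|\nabla v-q|^2\le C_1 a^2\eta^2$, a refinement that is not needed since, as in your write-up, the sup bound on $|\nabla v|$ already suffices.
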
\begin{proof} 
 Let $v$ denote the harmonic replacement of $u$ in $B_1.$ Then,
$$ \int_{B_1} |\nabla u -\nabla v|^2 \ dX \leq E(u, B_1) + \int_{B_1} (|\nabla v|^2 - 2 \nabla u \cdot \nabla v) \ dX$$
and by \eqref{first} together with the fact that 
$$\int_{B_1} \nabla v \cdot \nabla(u-v) \ dX=0$$ this gives ($C$ universal)
$$ \int_{B_1} |\nabla u -\nabla v|^2  \ dX\leq \sigma \int_{B_1} |\nabla v|^2 \ dX + C.$$ Thus, since $v$ minimizes the Dirichlet integral in $B_1$,
$$ \fint_{B_1} |\nabla u -\nabla v|^2 \ dX \leq \sigma \fint_{B_1} |\nabla u|^2 \ dX + C=\sigma a^2 +C.$$

Since $|\nabla v|^2$ is subharmonic in $B_1$ and $v$ minimizes the Dirichlet integral, we conclude that 
$$|\nabla v| \leq C_0 a \quad \text{in $B_{1/2}$},$$
with $C_0$ universal.
Thus, since $\nabla v$ is harmonic, if we denote by $q:=\nabla v(0)$, we conclude that $|q| \leq C_0a,$
and
$$\fint_{B_{\eta}} |\nabla v - q|^2 \ dX \leq C_1 a ^2\eta^2, \quad \quad \forall \, \, \eta \le 1/2,$$
with $C_1$ universal.
Thus, for $\bar C$ universal
\begin{equation}\label{b0}\fint_{B_{\eta}} |\nabla u - q|^2 \ dX \leq 2\sigma\eta^{-n-1}a^2 + 2 C_1 \eta^2 a^2 + \bar C \eta^{-n-1},\end{equation}
and we use $|q| \le C_0a$, hence 
\begin{equation}\label{bound1_thin}\eta \fint_{B_{\eta}} |\nabla u |^2 \ dX \leq 4\sigma\eta^{-n}a^2 + 4 C_1 \eta^3 a^2 + 2\bar C \eta^{-n} + 2 \eta C_0^2 a^2.\end{equation}
Now, we can choose $\eta$ small universal so that
$$2 \eta C_0^2 \leq \frac 1 8, \quad 4 C_1 \eta^3 \leq \frac{1}{24},$$
and $\sigma$ small so that
$$4 \sigma \eta^{-n} \leq \frac{1}{24}.$$
Then we distinguish two cases. Either
$$2\bar C \eta^{-n} \geq \frac{a^2}{24},$$
hence the first alternative holds, or $2\bar C \eta^{-n} < \frac{a^2}{24},$ which combined with \eqref{bound1_thin} and the choice of $\eta$ and $\sigma$ provides the bound in the second alternative.

\end{proof}

We can now prove our regularity theorem, which clearly implies Theorem \ref{intro_main_thin} in the introduction.

\begin{thm}\label{weak_main_thin}Let $u$ satisfy,
$$E(u, B_r(X)) \leq (1+\sigma) \, \,  E(v,B_r(X)) \quad \quad  \forall \, B_r(X) \subset B_1,$$and all competitors $v=u$ on $\p B_r(X).$ If $\sigma$ is small enough universal, 
then
 $$\|u\|_{C^{0,1/2}(B_{1/2})} \le C (\|u\|_{H^1(B_1)} +1)$$ for some constant $C$ universal. Moreover, $u$ has a uniform $C^{0,1/2}$ seminorm in a small ball $B_{r_0}$, 
with $r_0$ depending on $n$ and $\|u\|_{H^1(B_1)}$.
\end{thm}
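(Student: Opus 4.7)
The idea is to iterate the dichotomy of Proposition~\ref{dich_thin} on dyadic rescalings around every base point $X_0\in B_{1/2}$, use the resulting control on the Dirichlet energy to verify the Morrey/Campanato condition corresponding to $C^{0,1/2}$ in $\R^{n+1}$, and then derive the $L^\infty$ bound from the seminorm plus the $L^2$ datum.

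Fix $X_0\in B_{1/2}$ and, for $k\ge 0$, set $u_k(X):=(\tfrac12\eta^k)^{-1/2}u(X_0+\tfrac12\eta^k X)$ for $X\in B_1$, so that $u_k$ is well-defined on $B_1$. The scaling $\rho^{-1/2}u(\rho\,\cdot)$ preserves the hypothesis of Theorem~\ref{weak_main_thin} with the same $\sigma$, since both sides of the inequality in \eqref{first} rescale by a common factor $\rho^{-n}$ and the positivity set is unaffected. Hence Proposition~\ref{dich_thin} applies to each $u_k$. Writing $a_k:=(\fint_{B_1}|\nabla u_k|^2\,dX)^{1/2}$, the proposition gives the dichotomy: either $a_k\le M$, or $a_{k+1}\le a_k/2$.

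A subtlety is that the dichotomy by itself does \emph{not} prevent $a_{k+1}$ from jumping back above $M$ once $a_k\le M$, so the proof needs the unconditional bound $a_{k+1}^2\le C(a_k^2+1)$ implicit in \eqref{bound1_thin} (obtained by dropping the favorable choice of the constants $\eta,\sigma$). Combining the two: whenever $a_k\le M$ we get $a_{k+1}\le M^*:=C(M+1)$, a universal constant; whenever $a_k>M$ we get $a_{k+1}\le a_k/2$. Starting from $a_0\le C\|u\|_{H^1(B_1)}$, the sequence decays geometrically until it first enters $\{a\le M\}$ after $k^*\le C\log_2(\|u\|_{H^1}+1)$ steps, and is trapped in $\{a\le M^*\}$ thereafter. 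Thus $a_k\le\max(a_0,M^*)\le C(\|u\|_{H^1}+1)$ for every $k$, and $a_k\le M^*$ for $k\ge k^*$.

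Translating back and sandwiching an arbitrary $r$ between consecutive dyadic scales $\eta^k/2$ yields
$$\int_{B_r(X_0)}|\nabla u|^2\,dY\le C(\|u\|_{H^1}+1)^2\, r^n\qquad\forall\, X_0\in B_{1/2},\ r\in(0,\tfrac12],$$
which is precisely the Morrey/Campanato condition for $C^{0,1/2}$ in $\R^{n+1}$ (since $n=(n+1)-2+2\cdot\tfrac12$). Campanato's characterization yields $[u]_{C^{0,1/2}(B_{1/2})}\le C(\|u\|_{H^1}+1)$, and combining with $\|u\|_{L^2(B_{1/2})}\le\|u\|_{H^1}$ via the elementary bound $\|u\|_{L^\infty}\le C(\|u\|_{L^2}+[u]_{C^{0,1/2}})$ gives the first statement. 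For the second, restricting the Morrey estimate to the range $r\le r_0:=\eta^{k^*}/2$ replaces $(\|u\|_{H^1}+1)$ by the universal constant $M^*$, so Campanato on $B_{r_0/2}$ produces a universal seminorm bound, with $r_0=r_0(n,\|u\|_{H^1})$ as required. The main obstacle is exactly the stabilization step described above: complementing the dichotomy with the unconditional bound to trap $\{a_k\}$ inside the universal ball $\{a\le M^*\}$ once it enters it; this is what upgrades the proposition's two-alternative statement into a genuine Morrey decay.
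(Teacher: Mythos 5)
Your proof is correct and follows essentially the same route as the paper: iterate the dichotomy of Proposition~\ref{dich_thin} at geometric scales around every center in $B_{1/2}$, handle the first alternative by an unconditional bound (the paper uses the trivial inclusion bound $a(\eta^{k+1})\le C(\eta)\,a(\eta^k)$, playing the same role as your $a_{k+1}^2\le C(a_k^2+1)$), conclude the Morrey-type decay $a(r)\le C(1+a(1))$, and finish by Poincar\'e/Campanato, with the $B_{r_0}$ statement obtained exactly as you describe by waiting $k^*\sim\log(\|u\|_{H^1}+1)$ scales until the universal bound takes over.
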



\begin{proof} 
Let $\eta$, $M \ge 1$, and $\sigma_0$ be the constants from Proposition \ref{dich_thin} and assume $\sigma \leq \sigma_0$. Set
$$a(\tau):= \left(\tau\fint_{B_\tau} |\nabla u|^2 \ dX\right)^{1/2}.$$
We show that for all $k \ge 0$ the following inequality holds
\begin{equation}\label{ki}
a(\eta^k) \leq C(\eta) M + 2^{-k} a(1),
\end{equation}
with $C(\eta)$ a large constant.

For $k=0$ the desired inequality is clearly satisfied. Let us assume that it holds for $k$ and let us show that it holds also for $k+1.$

By Proposition \ref{dich_thin} (rescaled) either 
$$a(\eta^{k}) \leq M,$$ 
or 
$$a(\eta^{k+1}) \leq \frac 1 2 a(\eta^{k}).$$
Then \eqref{ki} holds also for $k+1,$ since if the first alternative holds, we can use that $$a(\eta^{k+1}) \leq C(\eta) a(\eta^{k}).$$ In conclusion $a(r) \le C (1+ a(1))$ for all $r <1$, and by Poincare inequality we conclude that
$$r^{-1/2} \fint_{B_r} (u - \bar u)\ dX \leq C(n, \|u\|_{H^1}), \quad \bar u= \fint_{B_r} u \ dX.$$
The same inequality can be obtained for the averages over all balls with center in $B_{1/2}$ which are included in $B_1,$ and this gives $$\|u\|_{C^{0,1/2}(B_{1/2})} \le C,$$ 
by Morrey-Campanato theory. 

\end{proof}

 From now on we discuss properties of almost minimizers near its zero set. By Theorem \ref{weak_main_thin} we may assume after a dilation that
$$ \|u\|_{C^{0,1/2}(B_1)} \le C, $$
for some $C$ depending only on $n$. 
Notice that by Caccioppoli inequality (Theorem 6.5 in \cite{G}), we also have that $$\|u\|_{H^1(B_{1/2})} \leq C'.$$ 
For completeness we sketch how this last bound is obtained, and we need much weaker hypotheses. If $u$ satisfies a rough energy inequality of the type 
$$E(u,B_r) \le 2 (E(v,B_r) + 1)  \quad \forall  \, r \in (1/2,1),$$
then, by taking $v$ an interpolation between $0$ in $B_r$ and $u$ in $B_{r+t} \subset B_1$, one obtains 
$$\int_{B_r}|\nabla u|^2 dX \le C \int_{B_{r+t}\setminus B_r} t^{-2} u^2 + |\nabla u|^2 dX + C.$$
Now by a standard iteration (see Lemma 6.1 in \cite{G}) it follows
$$\int_{B_{1/2}}|\nabla u|^2 dX \le C \int_{B_1} u^2 dX+ C .$$
In view of the Caccioppoli inequality we may assume after a dilation that also $E(u,B_1) \le C$. Then the energy inequality
\begin{equation}\label{en1}
E(u,B_1) \le (1+\sigma) E(v,B_1)
\end{equation} 
for any $v$ that equals with $u$ on $\p B_1$, implies
\begin{equation}\label{en2}
E(u,B_1) \le  E(v,B_1) + C'' \sigma,
\end{equation}
for some $C''$ depending only on $n$. It is more convenient working with \eqref{en2} instead of \eqref{en1} since the energies cancel in a region where $v=u$.

\subsection{Nondegeneracy}
After relabeling $\sigma$ in \eqref{en2} we assume that 
 $u$ satisfies
\begin{equation}\label{3000_thin}
\|u\|_{C^{0,1/2}(B_1)} \le C, \quad \quad \mbox{and} \quad E(u,B_1) \le  E(v, B_1) + \sigma,
\end{equation} 
with $\sigma$ small, for any $v \in H^1(B_1)$ which agrees with $u$ on $\p B_1$. Notice that by the standard Caccioppoli inequality $\|u\|_{H^1(B_{7/8})} \le C$.

\begin{rem}\label{rem01_thin} We remark that if $u$ vanishes at some point in $B_\rho$ then the rescaling $$ u_\rho(X):=\rho^{-1/2}u(\rho X)$$ satisfies \eqref{3000_thin} with $\sigma_\rho:= \rho^{-n} \sigma.$ 
\end{rem}  


We start with the following basic consequences of \eqref{3000_thin} by comparing $u$ with its harmonic replacement. Then we obtain a subsequent Harnack type inequality.

\begin{lem}[Harmonic replacement]\label{replace_thin} Assume \eqref{3000_thin} holds, $B_1 \subset \{u>0\}$, and let $v$ denote the harmonic replacement of $u$ in $B_{7/8}$. Then, 
\begin{equation}\label{uv}
\|u-v\|_{L^\infty(B_{1/2})} \leq c(\sigma), \quad c(\sigma) \to 0 \quad \text{as $\sigma \to 0$.}
\end{equation}
\end{lem}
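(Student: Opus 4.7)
The plan is to use the harmonic replacement $v$ (extended by $u$ outside $B_{7/8}$) as a competitor in the energy inequality \eqref{3000_thin}, extract an $H^1_0$ bound on $u-v$ of order $\sqrt{\sigma}$, and then upgrade to $L^\infty$ on $B_{1/2}$ by interpolating against the uniform Hölder regularity of $u$ and $v$. I set $\tilde v := v$ on $B_{7/8}$ and $\tilde v := u$ on $B_1 \setminus B_{7/8}$, so that $\tilde v \in H^1(B_1)$ agrees with $u$ on $\p B_1$. The subtle point is that \eqref{3000_thin} compares the full energy $E$, not the Dirichlet part alone, so I need the thin surface terms of $u$ and $\tilde v$ in $E(\cdot,B_1)$ to agree. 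This is where the hypothesis $B_1 \subset \{u>0\}$ enters: since $v = u > 0$ on $\p B_{7/8}$, the strong maximum principle gives $v>0$ throughout $B_{7/8}$, in particular on the thin disc $\mathcal B_{7/8}$, so both surface terms equal $\mathcal H^n(\mathcal B_1)$.

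Once the surface contributions cancel, \eqref{3000_thin} reduces to the Dirichlet-only comparison $\int_{B_{7/8}}|\nabla u|^2\,dX \le \int_{B_{7/8}}|\nabla v|^2\,dX + \sigma$. Using $u-v \in H^1_0(B_{7/8})$ and the harmonicity of $v$ to write $\int_{B_{7/8}} \nabla v \cdot \nabla(u-v)\,dX = 0$, the square expands as
$$\int_{B_{7/8}} |\nabla(u-v)|^2\,dX = \int_{B_{7/8}} |\nabla u|^2 - |\nabla v|^2 \,dX \le \sigma,$$
and Poincaré's inequality yields $\|u-v\|_{L^2(B_{7/8})} \le C\sqrt{\sigma}$.

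To upgrade this $L^2$ estimate to the desired $L^\infty$ bound on $B_{1/2}$ I interpolate with Hölder regularity. By \eqref{3000_thin}, $u$ has universally bounded $C^{0,1/2}(B_1)$ norm; meanwhile $v$ is harmonic in $B_{7/8}$ with $\|v\|_{L^\infty(B_{7/8})} \le \|u\|_{L^\infty(\p B_{7/8})} \le C$ by the maximum principle, so standard interior estimates give $\|v\|_{C^{0,1}(B_{3/4})} \le C$. Writing $w := u-v$, which has a universal $C^{0,1/2}(B_{3/4})$ seminorm, for $X \in B_{1/2}$ and any $r \le 1/4$ one has
$$|w(X)| \le \fint_{B_r(X)} |w|\,dY + [w]_{C^{0,1/2}(B_{3/4})}\, r^{1/2} \le C\, r^{-(n+1)/2}\sqrt{\sigma} + C\, r^{1/2}.$$
Choosing $r = \sigma^{1/(n+2)}$ balances the two terms and yields $\|u-v\|_{L^\infty(B_{1/2})} \le C\, \sigma^{1/(2(n+2))}$, giving the required modulus $c(\sigma) \to 0$.

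The only real obstacle is the cancellation of the surface terms in the first step: without $B_1 \subset \{u>0\}$ and strict positivity of $v$ on $\mathcal B_{7/8}$, the competitor $\tilde v$ might have a strictly smaller surface term than $u$, the inequality would go the wrong way, and no one-sided Dirichlet control on $u-v$ could be extracted. Everything else is routine once the Dirichlet comparison is available.
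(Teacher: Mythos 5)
Your proposal is correct and follows essentially the same route as the paper: cancel the thin surface terms using $v>0$ (from the maximum principle and $B_1\subset\{u>0\}$), use orthogonality of the harmonic replacement to get $\int_{B_{7/8}}|\nabla(u-v)|^2\le\sigma$, then Poincar\'e plus the uniform H\"older bounds to pass from $L^2$ to $L^\infty$. Your final interpolation step (averaging over $B_r(X)$ and optimizing $r$) is just a rephrasing of the paper's argument that a value $\mu$ of $u-v$ forces $\mu^{2n+4}\le C\sigma$, and it yields the same modulus $c(\sigma)=C\sigma^{1/(2n+4)}$.
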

\begin{proof}
By the maximum principle, $v>0$ in $B_1$, hence using \eqref{3000_thin} and the fact that $v$ is the harmonic replacement of $u$ we get 
$$\int_{B_{7/8}}|\nabla u -\nabla v|^2 dx \leq  \sigma.$$
By Poincare inequality we conclude that ($C$ changing from line to line)
$$\int_{B_{3/4}} (u-v)^2 dx \leq C\sigma$$
with $u-v$ uniformly $C^{0,1/2}$ in $B_{3/4}$. Thus, if $(u-v)(x) \geq \mu$ say at $x \in B_{1/2}$, we conclude that 
$$ \mu^{2n+4} \leq C \sigma.$$ Thus \eqref{uv} holds with $c(\sigma)= C \sigma^{1/(2n+4)}.$

\end{proof}

\begin{rem}\label{one_side}The upper bound in \ref{uv} holds also if we remove the assumption that $u$ is not strictly positive in $B_1$. Precisely,  if $v$ is a harmonic function in $D$, say $v$ continuous on $\p D$, with $v \geq u$ on $\p D$ and $D \subset \subset B_1$, we can conclude that $$u \leq v + c(\sigma,D') \quad \text{in $D' \subset \subset D$}, \quad c(\sigma,D') \to 0 \quad \text{as $\sigma \to 0$.}$$

Indeed, we can argue as in the proof of Lemma \ref{replace_thin}, using  $$V =\begin{cases}\bar v :=\min\{u,v\} \quad \text{in $D$}\\
u \quad \text{outside $D$,}
\end{cases}$$ as a competitor for the energy. We conclude that
$$E(u,D) -E(\bar v,D) \leq \sigma.$$
This leads to
$$\int_D |\nabla(u-\bar v)|^2 \ dX \leq \sigma,$$ and we can continue the argument as in Lemma \ref{replace_thin}, using that $v$ is $1/2$-H\"older continuous in $D'$.
\end{rem}

An immediate consequence of Lemma \ref{replace_thin} is the following version of Harnack inequality.
\begin{cor}\label{lhi_thin}
Assume \eqref{3000_thin} holds, $B_1 \subset \{u>0\}$, and let $w$ be a harmonic function such that $u \ge w$ in $B_1$ and $u - w \ge \mu$ at $0$ for some $\mu \le \mu_0$, $\mu_0$ small. Then $u-w \ge c \mu$ in $B_{1/2}$ for some $c$ universal provided that $\sigma \le \mu^{2n+5}$. 
\end{cor}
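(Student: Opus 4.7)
The plan is to reduce to classical Harnack inequality for harmonic functions via the harmonic replacement comparison in Lemma \ref{replace_thin}.

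First I would let $v$ be the harmonic replacement of $u$ in $B_{7/8}$. Since $u \ge w$ in $B_1$ and $v = u$ on $\partial B_{7/8}$, the maximum principle applied to the harmonic function $v - w$ in $B_{7/8}$ gives $v \ge w$ throughout $B_{7/8}$. Thus $v - w$ is a non-negative harmonic function in $B_{7/8}$, and the classical Harnack inequality yields a universal constant $c_0$ such that
\begin{equation*}
v - w \ge c_0 (v-w)(0) \quad \text{in } B_{1/2}.
\end{equation*}

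Next I would transfer the information at the origin. By Lemma \ref{replace_thin}, $\|u-v\|_{L^\infty(B_{1/2})} \le C \sigma^{1/(2n+4)}$. Combined with the hypothesis $(u-w)(0) \ge \mu$, this gives
\begin{equation*}
(v-w)(0) \ge (u-w)(0) - |u-v|(0) \ge \mu - C \sigma^{1/(2n+4)}.
\end{equation*}
Under the assumption $\sigma \le \mu^{2n+5}$ we have $\sigma^{1/(2n+4)} \le \mu^{(2n+5)/(2n+4)} = \mu \cdot \mu^{1/(2n+4)}$, so if $\mu \le \mu_0$ with $\mu_0$ small enough so that $C \mu_0^{1/(2n+4)} \le 1/2$, we conclude $(v-w)(0) \ge \mu/2$, and therefore $v - w \ge (c_0/2)\mu$ in $B_{1/2}$.

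Finally I would pass back to $u - w$ using the same harmonic replacement bound:
\begin{equation*}
u - w \ge (v-w) - |u-v| \ge \tfrac{c_0}{2}\mu - C\sigma^{1/(2n+4)} \quad \text{in } B_{1/2},
\end{equation*}
and the same choice of $\mu_0$ (possibly smaller) makes the error term at most $\tfrac{c_0}{4}\mu$, yielding $u - w \ge c\mu$ with $c = c_0/4$ universal. The main bookkeeping issue, which is not really an obstacle but simply the point of the exponent $2n+5$, is to choose the smallness threshold so that the $\sigma^{1/(2n+4)}$ error is absorbed by $\mu$ both before and after applying Harnack; the exponent $2n+5 = (2n+4)+1$ in the hypothesis on $\sigma$ provides exactly the extra power of $\mu$ needed for this absorption.
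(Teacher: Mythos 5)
Your proof is correct and is precisely the intended argument: the paper states the corollary as an immediate consequence of Lemma \ref{replace_thin}, and your route (harmonic replacement, maximum principle to get $v-w\ge 0$, classical Harnack for $v-w$, then absorbing the $c(\sigma)=C\sigma^{1/(2n+4)}$ error using $\sigma\le\mu^{2n+5}$ and $\mu\le\mu_0$ small) is the natural way to fill it in, including the correct bookkeeping explaining the exponent $2n+5$.
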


Our non-degeneracy lemmas can be stated as follows. Their proof follows the ideas in \cite{DS1}.

\begin{lem}[Weak non-degeneracy] \label{weaknd}Assume \eqref{3000_thin} holds, $B_1 \subset \{u>0\}$, and $\sigma$ is small. Then there exists a constant $c>0$ such that $u(0)\ge c.$
\end{lem}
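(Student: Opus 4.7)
The strategy is to argue by contradiction: suppose $u(0)=\mu$ is very small, and derive a contradiction with \eqref{3000_thin} by exhibiting a competitor that zeroes out $u$ on a fixed-size disk inside the slit. The key preliminary step is to propagate the smallness of $u$ at the origin to smallness throughout a definite ball. Let $v$ denote the harmonic replacement of $u$ in $B_{7/8}$. Since $B_1 \subset \{u>0\}$, one has $u > 0$ on $\partial B_{7/8}$, so the maximum principle gives $v > 0$ in $B_{7/8}$, and Harnack's inequality yields $\sup_{B_{1/2}} v \le C_H v(0)$. Combined with $\|u-v\|_{L^\infty(B_{1/2})} \le c(\sigma)$ from Lemma \ref{replace_thin}, this produces the pointwise estimate
$$\sup_{B_{1/2}} u \;\le\; C_H\, \mu + (C_H+1)\, c(\sigma).$$

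Next, I would construct a competitor $w$ that equals $u$ on $\partial B_{1/2}$ but has $w(x,0)\equiv 0$ on $\mathcal B_{1/4}$. A convenient choice is $w = \eta u$, where $\eta: \R^{n+1} \to [0,1]$ is a fixed Lipschitz cutoff that vanishes on $\mathcal B_{1/4}$ and equals $1$ outside a thin universal neighborhood of $\mathcal B_{1/4}$, so in particular $\eta\equiv 1$ on $\partial B_{1/2}$; a standard thin-capacity-type construction ensures $\|\nabla \eta\|_{L^2} \le C$. Expanding
$$|\nabla w|^2 - |\nabla u|^2 \;=\; (\eta^2-1)|\nabla u|^2 + 2 u\eta\, \nabla u \cdot \nabla \eta + u^2 |\nabla \eta|^2,$$
discarding the nonpositive first term, applying Cauchy--Schwarz to the middle term, and using the Caccioppoli bound $\|\nabla u\|_{L^2(B_{7/8})} \le C$ together with the pointwise estimate above, I would obtain
$$\int_{B_{1/2}} \bigl(|\nabla w|^2 - |\nabla u|^2\bigr)\, dX \;\le\; C\bigl(\mu + c(\sigma)\bigr).$$

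Since $w = u$ outside $B_{1/2}$, the global energy inequality \eqref{3000_thin} specializes to $E(u,B_{1/2}) \le E(w,B_{1/2}) + \sigma$. Using $B_1\subset\{u>0\}$ and $w\equiv 0$ on $\mathcal B_{1/4}$, the area term drops by the fixed positive constant $|\mathcal B_{1/4}|$, yielding
$$|\mathcal B_{1/4}| \;\le\; C\bigl(\mu + c(\sigma)\bigr) + \sigma.$$
Choosing $\sigma$ small enough that $C c(\sigma) + \sigma < |\mathcal B_{1/4}|/2$ forces $\mu \ge c$ for some universal $c > 0$, which is the desired contradiction. The main obstacle is obtaining a Dirichlet cost estimate that is linear in $\mu$ rather than of order $1$: this is precisely what demands the preliminary step of propagating the pointwise smallness of $u$ from the single point $0$ to all of $B_{1/2}$ via harmonic replacement plus Harnack, before cutting off $u$ to $0$ on the slit disk.
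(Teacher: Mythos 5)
Your proof is correct and takes essentially the same route as the paper: harmonic replacement plus Harnack propagates the smallness of $u(0)$ to a full ball, and a cutoff competitor vanishing on a fixed slit disk gains a definite amount of the area term at a Dirichlet cost controlled by $u(0)$, which forces $u(0)\ge c$ once $\sigma$ is small. The only (harmless) differences are that you truncate $u$ itself rather than its harmonic replacement $v$, and you control the cross term via Cauchy--Schwarz with the Caccioppoli $H^1$ bound instead of the interior estimate $|\nabla v|\le C\,v(0)$, yielding a cost linear rather than quadratic in $u(0)$.
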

\begin{proof}
Let $v$ be the harmonic replacement of $u$ in $B_{7/8}$ and then extended by $u$ outside $B_{7/8}$. Then according to Lemma \ref{replace_thin},  it is enough to prove the desired statement for $v.$

Now, let $\varphi \in C_0^{\infty}(B_{3/4})$ with $\varphi \equiv 1$ in $B_{1/2}$ and $0 \leq \varphi \leq 1.$ Since $v$ minimizes the Dirichlet integral and $v>0$ in $B_1$ (by the maximum principle), we have
\begin{equation}\label{Juv}E(v, B_1) \leq E(u, B_1) \leq E(v(1-\varphi), B_1) + \sigma.\end{equation}
On the other hand, since $v$ is harmonic, ($C$ universal possibly changing from line to line)
\begin{equation*}
\|v\|_{L^\infty(B_{3/4})}, \|\nabla v\|_{L^\infty(B_{3/4})} \leq C v(0),
\end{equation*}
from which we deduce that
\begin{equation}\label{cv}
\int_{B_1} |\nabla v|^2 dX \geq \int_{B_1} |\nabla v(1-\varphi)|^2 dX -C (v(0))^2.
\end{equation}
Combining \eqref{Juv}-\eqref{cv} (since $v>0$ in $B_1$) we get 
$$\int_{B_1}|\nabla v| ^2 + \mathcal H^n(\mathcal B_1) \leq Cv(0)^2 + \int_{B_1} |\nabla v|^2 + \mathcal H^n(\mathcal B_1 \setminus \mathcal B_{1/2}) +\sigma.$$
hence
$$C v(0)^2 \geq c_0 - \sigma \ge  c_0 /2,$$
for $c_0>0$ universal.

\end{proof}

\begin{lem}[Strong non-degeneracy]\label{snd}Let $u$ satisfy \begin{equation}\label{3001}
\|u\|_{C^{0,1/2}(B_1)} \le C, \quad \quad \mbox{and} \quad E(u,B_r(X)) \le (1+\sigma) E(v, B_r(X)) 
\end{equation} 
for any ball $B_r(X) \subset B_1$ with center $X \in \mathcal B_1$, and any $v \in H^1(B_r(X))$ which agrees with $u$ on $\p B_r(X)$.
If $0 \in F(u):=\p_{\R^n} \{u>0\}$, then $$\max_{B_r} u \ge c \, \, r^{1/2},$$ 
with $\sigma$, $c$ small positive constants depending only on $n$.
\end{lem}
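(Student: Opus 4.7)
The plan is to argue by contradiction, adapting the strong non-degeneracy argument for minimizers in \cite{DS1} to the almost-minimizer setting via the multiplicative form of \eqref{3001}. Suppose that $\max_{B_r} u < \epsilon r^{1/2}$ for some small $\epsilon$ and some $r \le r_0$. Because \eqref{3001} is invariant under the rescaling $u_r(X) := r^{-1/2} u(rX)$ with the same $\sigma$ (the $r^{-n}$ scaling of $E$ cancels on both sides of the multiplicative inequality), one reduces to the normalized setting where $\max_{B_1} u < \epsilon$, $0 \in F(u)$, and \eqref{3001} holds at every scale inside $B_1$.

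The second step is to construct a competitor $v \in H^1(B_1)$ that vanishes on the thin disk. Let $v$ coincide with $u$ outside $B_{3/4}$, and inside $B_{3/4}$ let $v$ be the unique function that is harmonic in $B_{3/4} \setminus \mathcal B_{3/4}$ with $v = u$ on $\partial B_{3/4}$ and $v \equiv 0$ on $\mathcal B_{3/4}$ (in the trace sense). The maximum principle in each half-ball gives $0 \le v \le \epsilon$, and scaling a universal mixed boundary-value problem (the square-root type solution near the edge of $\mathcal B_{3/4}$ has finite Dirichlet energy) yields $\int_{B_{3/4}} |\nabla v|^2 \le A\epsilon^2$ for some universal $A = A(n)$. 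Since $\{v(x,0)>0\} \cap \mathcal B_{3/4} = \emptyset$, applying \eqref{3001} on $B_{3/4}$ (whose center $0$ lies on $\mathcal B_1$) gives
\[
 E(u, B_{3/4}) \le (1+\sigma)\,E(v, B_{3/4}) = (1+\sigma) \int_{B_{3/4}} |\nabla v|^2\,dX \le 2A\epsilon^2,
\]
and hence the two smallness bounds
\[
 \mathcal H^n\big(\{u(x,0) > 0\} \cap \mathcal B_{3/4}\big) \le 2A\epsilon^2, \qquad \int_{B_{3/4}} |\nabla u|^2\,dX \le 2A\epsilon^2.
\]

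The third step is to convert these bounds into a contradiction with $0 \in F(u)$ by using the weak non-degeneracy (Lemma \ref{weaknd}). Weak non-degeneracy together with the $C^{0,1/2}$ regularity forces $u(x,0) \ge c_0\,\mathrm{dist}((x,0), \{u=0\})^{1/2}$ for every $(x,0) \in \{u > 0\}$, so that $\max u \le \epsilon$ confines the positive set $\{u > 0\} \cap \mathcal B_{1/2}$ to an $(\epsilon/c_0)^2$-tube around $\{u=0\}$. Iterating the comparison above at dyadic subscales — available because \eqref{3001} holds at every scale — one concludes that the positive set must in fact be empty in a neighborhood of the origin on the thin plane, contradicting the assumption that $0$ lies in $F(u) = \partial_{\R^n}\{u(x,0) > 0\}$.

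The main obstacle is this final step. Unlike the minimizer case $\sigma = 0$ treated in \cite{DS1}, the almost-minimizer error $\sigma$ in \eqref{3001} does not decrease under rescaling, so one cannot pass cleanly to a global minimizer blow-up limit; the iteration must be executed with $\sigma$ fixed but small. Turning the measure bound on $\{u>0\}$ into genuine \emph{emptiness} on a slightly smaller ball — which is the heart of strong non-degeneracy — is the most delicate point, and essentially reproduces the combinatorial/iterative argument of \cite{DS1} adapted to the flexibility afforded by the multiplicative form of \eqref{3001}.
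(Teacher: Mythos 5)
Your proposal does not close the argument: the decisive step is missing, and it is precisely the content of strong non-degeneracy. Steps 1--2 only yield that, under the contradiction hypothesis $\max_{B_1}u<\epsilon$, the Dirichlet energy of $u$ and the measure of $\{u(x,0)>0\}\cap\mathcal B_{3/4}$ are $O(\epsilon^2)$. But smallness of the density of the positivity set does not by itself contradict $0\in F(u)$: to rule out positive points accumulating at the origin you need either exact vanishing of $u$ near $0$ or a lower density/energy bound at free boundary points, and the latter is exactly what non-degeneracy is supposed to provide (the argument becomes circular). For minimizers ($\sigma=0$) the classical route works because the competitor comparison produces an absorption inequality whose only solution is $u\equiv 0$ in a smaller ball; with the multiplicative error $(1+\sigma)$ you only get ``small'', and the dyadic iteration you invoke has no mechanism to self-improve: the sup bound $\max_{B_\rho}u\le\epsilon(r/\rho)^{1/2}\rho^{1/2}$ actually deteriorates at smaller scales, and weak non-degeneracy (Lemma \ref{weaknd}) only tells you that positive points lie close to the zero set, which is perfectly consistent with $0\in F(u)$. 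So ``iterating at dyadic subscales one concludes the positive set is empty'' is an assertion, not a proof, and it is doubtful the route closes without substantial new input. A secondary, fixable, gap: the bound $\int_{B_{3/4}}|\nabla v|^2\le A\epsilon^2$ is not justified as stated, since the boundary datum of $v$ is $u$ (not a constant multiple of a fixed profile); the natural competitor $\min(u,\epsilon\Psi)$ brings in $\int|\nabla u|^2$, which must first be shown to be $O(\epsilon^2+\sigma)$ by a Caccioppoli argument using the sup bound.

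For comparison, the paper avoids this issue entirely by running a Caffarelli-type growth-chain argument (the reduction to ``Lemma 7 in [C3]''): one constructs points $X_k$ with $u(X_{k+1})=(1+\delta)u(X_k)$ and $|X_{k+1}-X_k|\le C\,\mathrm{dist}(X_k,F(u))$, and the existence of $X_{k+1}$ is proved by contradiction using the one-sided comparison of Remark \ref{one_side} with an explicit harmonic barrier in $B_M^+$, the H\"older continuity of $u$ at the nearest zero point $Y_k$, and the closeness to the harmonic replacement (Lemma \ref{replace_thin}) combined with the mean value property; weak non-degeneracy (Lemma \ref{weaknd}) is used only to bound $\mathrm{dist}(X_k,\{u=0\})$ from below. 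That argument needs the almost-minimality error only through the $c(\sigma)$-closeness to harmonic replacements, which is why it survives the fact that $\sigma$ does not improve under blow-up. If you want to keep your variational route, you would need to prove an absorption-type inequality with the $\sigma$-error tracked and then show it forces a decay incompatible with $0\in F(u)$; as written, that key step is absent.
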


\begin{proof} We recall that $\mathcal B_1$ denotes the $n$- dimensional ball included in $\{x_{n+1}=0\}$. By standard arguments, (see Lemma 7 in \cite{C3},) it suffices to show the  following claim. 

{\it Claim:} Let $X_0 \in \mathcal B_1 \cap \{u>0\}$ (close to the origin). There exists a sequence $X_k \in \mathcal B_1$ such that 
\begin{equation}\label{delta}
u(X_{k+1})=(1+ \delta) u(X_k)
 \end{equation}
with
\begin{equation}\label{delta2}
|X_{k+1}-X_k| \leq C dist(X, F(u))
 \end{equation}
for some $\delta$ small. 

We now show that the sequence of $X_k$'s  exists.  Assume we constructed $X_k$. After scaling we may suppose that $$u(X_k) = 1.$$ We call $Y_k$ the point where the distance from $X_k$ to $\{u=0\}$ is achieved. By the $C^{1/2}$ bound on $u$ and the non-degeneracy Lemma \ref{weaknd} we have $$c \leq d:=d(X_k)=|X_k-Y_k| \leq C.$$
Assume by contradiction that we cannot find $X_{k+1}$ in $\mathcal B_M$ with $M$ large to be specified later, with $$u(X_{k+1}) \geq 1+\delta.$$ Let $w$ be the harmonic function in $B_M^+$, such that $$w=0 \quad \text{on $\{x_{n+1} =0\} \cap \mathcal B_{M/2}$}, \quad w = CM^{1/2} \quad \text{on $\p B_M \cap \{x_{n+1}>0\}$},$$ and $w$ growing linearly in $\mathcal B_M \setminus \mathcal B_{M/2}.$ Here $C$ is chosen so that $u \leq w$ on $\p B_M \cap \{x_{n+1}>0\}$ (using the fact that $u$ is H\"older continuous and $u(0)=0$.)
Set $$v:=1+\delta +w.$$ Then $v$ satisfies the assumptions in Remark \ref{one_side}, with $D:= B_M^+$ and we can conclude that for $\sigma$ small (depending on $\delta$),
$$u \leq v + \delta \quad \text{on $B^+_{3M/4}$.}$$

On the other hand we have,
$$w \leq C x_{n+1} M^{-1/2} \leq \delta \quad \text{in $B^+:=B^+_{d}(X_k),$}$$ if $M$ is chosen large depending on $\delta.$ Thus, (by the same argument applied in $\{x_{n+1}<0\}$) $$\label{bound1}u \leq 1+3\delta \quad \text{in $B.$}$$
On the other hand, $u(Y_k)=0, Y_k \in \p B$. Thus from the H\"older continuity of $u$ we find $$\label{bound2}u \leq \frac 1 2, \quad \text{in $B_{c}(Y_k)$}.$$
We now use Lemma \ref{replace_thin} in $B$ (with $B_{d- c/2}$ instead of $B_{1/2}$) and conclude that if $\delta$ is sufficiently small we contradict that $$1=u(X_k)\leq \fint_{B_{d-\frac c 2}} u + c(\sigma).$$ 
\end{proof}

Once H\"older continuity and non-degeneracy have been established, it is straightforward to show that any blow-up sequence must converge uniformly on compact sets to a global minimizer. Moreover, by the Weiss monotonicity formula (see Theorem 4.1 in \cite{DS1}), it follows that the limit must be homogenous of degree 1/2. We sum up these facts in the next proposition.

\begin{prop}\label{blowup}
Assume $u$ is an almost minimizer of $E$ and that $0 \in F(u)$. Any blow up sequence converges uniformly (up to subsequences) to a global minimizing cone (homogenous of degree $1/2$) which has $0$ as a free boundary point. Also, their free boundaries converge in the Hausdorff distance (on compact sets) to the free boundary of the cone.
\end{prop}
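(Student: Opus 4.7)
The plan is to obtain compactness from the $C^{0,1/2}$ bound of Theorem \ref{weak_main_thin}, identify the limit as a genuine minimizer of $E$ because the almost-minimality constant $\kappa r_k^\beta$ vanishes under rescaling, deduce Hausdorff convergence of the free boundaries from non-degeneracy (Lemma \ref{snd}), and conclude $1/2$-homogeneity from the Weiss monotonicity formula.

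Given $r_k \to 0$, set $u_k(X) := r_k^{-1/2} u(r_k X)$, which is an almost minimizer in $B_{R/r_k}$ with constant $\kappa r_k^\beta$ and the same exponent $\beta$. The natural scaling of the $C^{0,1/2}$ seminorm, together with Theorem \ref{weak_main_thin}, makes the $u_k$ locally uniformly bounded in $C^{0,1/2}(\R^{n+1})$, so Arzel\`a--Ascoli yields a subsequence (still denoted $u_k$) converging locally uniformly to some nonnegative $u_0 \in C^{0,1/2}_{\mathrm{loc}}(\R^{n+1})$, with $0 \in F(u_0)$ (using non-degeneracy of each $u_k$ at the origin).

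To show that $u_0$ is an absolute minimizer of $E$ in each ball $B_R$, take a competitor $v$ with $v = u_0$ on $\p B_R$ and construct $v_k \in H^1(B_R)$ agreeing with $v$ in $B_{R-\eps}$ and with $u_k$ on $\p B_R$, interpolating in the thin annulus $B_R \setminus B_{R-\eps}$. A Fubini slicing on $\int (u_k - v)^2 + |\nabla u_k|^2 + |\nabla v|^2$ lets one pick an annular radius where the Dirichlet cost of the transition is $O(\eps) + o_k(1)$ and the thin-measure cost is $O(\eps)$. Passing $k \to \infty$ in
$$E(u_k, B_R) \le (1 + \kappa r_k^\beta R^\beta)\, E(v_k, B_R),$$
using lower semicontinuity of $E$ under uniform convergence (Dirichlet via weak $H^1$ compactness; the thin measure via Fatou applied to the open sets $\{u_k(\cdot,0)>0\}$) on the left and the above control on the right, then letting $\eps \to 0$, yields $E(u_0, B_R) \le E(v, B_R)$.

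For the Hausdorff convergence of $F(u_k)$ to $F(u_0)$, uniform convergence combined with the non-degeneracy of each $u_k$ (applicable since $\kappa r_k^\beta$ is small for large $k$) rules out both phantom and missing pieces. Finally, the Weiss monotonicity formula of \cite[Theorem 4.1]{DS1}, in its almost-minimizer version with an $O(r^\beta)$ correction, shows that $W(u, r)$ has a limit $W_0$ as $r \to 0^+$. By scale invariance $W(u_k, r) = W(u, r_k r) \to W_0$ for each fixed $r>0$, while continuity of $W$ under our convergence gives $W(u_0, r) = W_0$ for all $r > 0$. Since $u_0$ is a genuine minimizer, the equality case in Weiss's monotonicity forces $u_0$ to be homogeneous of degree $1/2$. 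The main obstacle is the minimizer step: constructing admissible competitors $v_k$ matching $u_k$ on $\p B_R$ at negligible energy cost requires the slicing choice of annular radius and uses the uniform $H^1$ bound on $u_k$ from the Caccioppoli inequality quoted after Theorem \ref{weak_main_thin}.
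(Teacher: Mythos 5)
Your argument is correct and follows essentially the same route the paper has in mind: the paper gives no detailed proof, merely asserting that uniform convergence to a global minimizer follows from the $C^{0,1/2}$ bound and non-degeneracy, and that $1/2$-homogeneity follows from the Weiss monotonicity formula of \cite{DS1}, which is exactly the compactness--competitor--Weiss scheme you carry out. The only step you compress is the ``continuity of $W$'' claim, which requires the standard upgrade to strong $H^1_{loc}$ convergence and convergence of the thin positivity measures (obtained by using the limit glued to $u_k$ as a competitor), but this is routine and consistent with the level of detail the paper itself assumes.
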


\subsection{$C^{1,\gamma}$ estimates up to $\{x_{n+1}=0\}$}
 
Next we show that an almost minimizer $u$ in $B_1$
has $C^{1,\gamma}$ estimates up to the slit boundary in the case when $u$ vanishes on $x_{n+1}=0$. 

\begin{lem}\label{c1g}
Let $u$ be an almost minimizer to $E$ in $B_1$ (with constant $\kappa$ and exponent $\beta$). Assume $E(u,B_1) \le C_0$ universal, and $u=0$ on $x_{n+1}=0$. 
 Then, if $\kappa \le \kappa_0(\beta,n)$ small, we have
$$[u]_{C^{1,\gamma}(B^+_{1/2})} \le C(\beta), \quad \quad \gamma:=\beta/2.$$ 

\end{lem}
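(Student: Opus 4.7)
The crucial observation is that the hypothesis $u=0$ on $\{x_{n+1}=0\}$ kills the surface term of $E$ for $u$, and if we choose competitors $v$ that also vanish on the slit the surface term drops from both sides of the almost-minimality inequality. Concretely, for $X_0 \in \mathcal B_{1/2}$ (and later, for $X_0 \in B_{1/2}^+$ close to the slit) and $r$ small, I define $v_\pm$ to be the harmonic replacement of $u$ in $B_r^\pm(X_0)$ subject to the boundary conditions $v_\pm = u$ on $\partial B_r(X_0)\cap\{\pm x_{n+1}\ge 0\}$ and $v_\pm = 0$ on $\mathcal B_r(X_0)$. Glueing gives a global competitor $v$ with $v=u$ on $\partial B_r(X_0)$, $v \ge 0$, and $v=0$ on the slit, so that the almost-minimality condition reduces to
\begin{equation*}
\int_{B_r(X_0)} |\nabla u|^2 \,\le\, (1+\kappa r^\beta)\int_{B_r(X_0)} |\nabla v|^2.
\end{equation*}
Since $v_\pm$ each minimize the Dirichlet energy in their half, this yields (in each half)
\begin{equation*}
\int_{B_r^\pm(X_0)} |\nabla(u-v_\pm)|^2 \,\le\, \kappa r^\beta \int_{B_r(X_0)} |\nabla u|^2.
\end{equation*}

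Next, I exploit the fact that $v_+$ extended by odd reflection across $\mathcal B_r(X_0)$ is harmonic in the full ball $B_r(X_0)$ (when $X_0 \in \mathcal B$), which gives the sharp Schauder-type decay
\begin{equation*}
\int_{B_{\tau r}^+(X_0)} |\nabla v_+ - (\nabla v_+)(X_0)|^2 \,\le\, C\tau^{n+3}\int_{B_r^+(X_0)} |\nabla v_+|^2,\qquad \tau \in (0,1/2).
\end{equation*}
For interior points with $\mathrm{dist}(X_0,\mathcal B)\ge r$ the argument is even easier since we may use the ordinary harmonic replacement on the full ball $B_r(X_0) \subset B_1^+$, and for intermediate points I enlarge the ball to reduce to one of the two extreme cases.

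Before iterating in Campanato fashion I need a uniform Lipschitz bound on $u$, since otherwise the error $\kappa r^\beta \int_{B_r}|\nabla u|^2$ in the above display is not controlled by the correct power of $r$. This bound follows from a preliminary iteration on the oscillation: starting from the $C^{0,1/2}$ and $H^1$ bounds already established in Section 2, the comparison $\|u-v\|_{L^\infty} \to 0$ of Lemma 2.6/Remark 2.7 together with the Lipschitz estimate for the odd-reflected harmonic $v$ (whose Lipschitz norm is controlled by the oscillation of $u$ over $\partial B_r(X_0)$, divided by $r$) yields $\mathrm{osc}(u, B_r(X_0)) \le C r$ by geometric iteration in $r$, provided $\kappa$ is small depending on $\beta, n$. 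Hence $|\nabla u| \le C$ in a neighborhood of the slit, and with the interior Caccioppoli estimate $\int_{B_r(X_0)} |\nabla u|^2 \le C r^{n+1}$.

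Feeding this into the comparison estimate, the triangle inequality and the harmonic decay give, with $\phi(X_0,r) := \inf_{A\in\R^{n+1}} \int_{B_r^+(X_0)} |\nabla u - A|^2$,
\begin{equation*}
\phi(X_0,\tau r) \,\le\, C \tau^{n+3}\phi(X_0,r) + C\kappa\, r^{n+1+\beta}.
\end{equation*}
Choosing $\tau$ small universal so that $C\tau^{n+3} \le \tfrac12 \tau^{n+1+\beta}$ (possible since $\beta<2$) and iterating in the standard way yields $\phi(X_0,r) \le C\, r^{n+1+\beta}$ for all $r \le r_0$, uniformly in $X_0$. By the Campanato characterization of H\"older spaces in $\R^{n+1}$, this is equivalent to $\nabla u \in C^{0,\beta/2}$, which is the desired conclusion $u \in C^{1,\gamma}$ with $\gamma = \beta/2$.

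The main technical obstacle is the Lipschitz bootstrap: without a priori control of $|\nabla u|$ near the slit one cannot convert $\int |\nabla u|^2$ in the error term into the correct power $r^{n+1+\beta}$ needed for Campanato. The smallness assumption $\kappa \le \kappa_0(\beta,n)$ enters precisely at this step to make the geometric iteration on oscillations contract. The two-scale nature of almost-minimizers (harmonic decay at one scale versus algebraic error $\kappa r^\beta$ at the other) is handled by the classical Giaquinta-Giusti iteration lemma, in the spirit of \cite{DS4}.
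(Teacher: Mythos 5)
Your overall skeleton is the paper's: since $u=0$ on $\{x_{n+1}=0\}$, competitors vanishing on the slit kill the surface term, $u$ becomes an almost minimizer of the Dirichlet energy, and one compares with the harmonic replacement $h$ in half-balls (which vanishes on the flat part, so odd reflection gives the sharp decay) and runs a Campanato iteration to get $\nabla u\in C^{0,\beta/2}$. Up to that point you are aligned with the proof in the paper.

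The genuine gap is your preliminary ``Lipschitz bootstrap.'' First, the oscillation iteration you describe cannot reach the critical exponent: the comparison gives $\mathrm{osc}(u,B^+_{\tau r})\le C\tau\,\mathrm{osc}(u,B^+_r)+e(r)$ with a universal $C>1$, and an iteration with contraction factor $C\tau$ against a target rate $\tau^k$ requires $C\tau<\tau$, which is impossible; such schemes only yield $C^{0,1-\epsilon}$, never $\mathrm{osc}\le Cr$, unless one subtracts linear profiles and tracks their slopes --- which is exactly the Campanato iteration you were trying to feed. Second, the error $e(r)$ you invoke from Lemma \ref{replace_thin}/Remark \ref{one_side} is, after rescaling to $B_r$, of size $\sim r^{1/2}(\kappa r^\beta)^{1/(2n+4)}$, i.e.\ far larger than the $o(r)$ needed for any Lipschitz-type iteration; those lemmas give fixed-scale smallness in $\sigma$, not the scale-by-scale decay your argument needs. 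The step is also unnecessary: the paper runs the iteration in normalized form, proving by induction that $\fint_{B^+_r(X)}|\nabla u-q_{X,r}|^2\le C_1 r^\beta$ with $|q_{X,\eta r}-q_{X,r}|\le C(C_1)r^{\beta/2}$, so the $q_{X,r}$ stay bounded by a telescoping geometric series (starting from $E(u,B_1)\le C_0$); this automatically gives $\fint_{B^+_r}|\nabla u|^2\le C(C_1,\beta)$ at every scale, which is precisely what absorbs the almost-minimizer error $\kappa\,\eta^{-(n+1)}r^\beta\fint_{B_r^+}|\nabla u|^2$ once $\kappa\le\kappa_0(\beta,n)$. If you prefer to keep your two-step structure, the repair is to first prove only the Morrey bound $\int_{B_r^+}|\nabla u|^2\le Cr^{n+1-\epsilon}$ by a crude dyadic energy iteration, conclude $C^{1,(\beta-\epsilon)/2}$, and then bootstrap with the resulting gradient bound to recover $\gamma=\beta/2$; but the self-contained normalized iteration is simpler.
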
 

\begin{proof}
We use competitors $v$ which also vanish on $x_{n+1}=0$, hence $u$ is an almost minimizer for the Dirichlet energy and then the proof is standard (see \cite{G}.). 

We sketch some details below. In any ball $B_r(X) \subset B_1$ included in $x_{n+1}>0$ or any half-ball with center on $x_{n+1}=0$, we have
\begin{equation}\label{2111}
\int_{B^+_r(X)}|\nabla (u - h)|^2 \le \kappa r^\beta \int_{B^+_r(X)}|\nabla u|^2,
\end{equation}
where $B_r^+(X)$ represents the restriction of $B_r(X)$ to the upper half space, and $h$ is the harmonic replacement of $u$ in $B_r^+(X)$. The desired conclusion follows by a standard Morrey-Campanato iteration: we show by induction that for a sequence of radii $r=\eta^k$, with $\eta$ small, depending on $\beta$ and $n$, we have
$$\fint_{B^+_r(X)}|\nabla u - q_{X,r}|^2 \le C_1 r^\beta, $$
for some vectors $q_{X,r}$ (which are in turn bounded by $C(C_1, \beta)$).

Indeed, by the $C^2$ estimates for $h$ we find from the induction hypothesis
$$|\nabla h - \nabla h(X)|^2 \le C(n) \eta^2 \, C_1 r^\beta \quad \mbox {in $B^+_{\eta r}(X)$,}$$
which combined with \eqref{2111} gives
$$ \fint_{B^+_{\eta r}(X)}|\nabla u - \nabla h(X)|^2 \le C(n) \eta^2 \, C_1 r^\beta + C(C_1, \beta) \,  \kappa \eta^{-(n+1)} r^\beta \le C_1(\eta r)^\beta,$$
provided that $\kappa$ is chosen sufficiently small.

\end{proof}

We give a rescaled version of the lemma above.

\begin{cor}\label{c1g2}
Assume that $\|u\|_{C^{1/2}(B_1)} \le C$, and $u$ is an almost minimizer to $E$ in $B_1$ with constant $\kappa \le \kappa_0$. Let $\mathcal B_r(x_0) \subset \mathcal B_{1/2}$ be a $n$-dimensional ball included in $\{u=0\}$ which is tangent to the free boundary. Then in $B_{r/2}^+(x_0)$ we have
$$[u]_{C^{1,\gamma} } \le C _1 r^{-(\frac 12 + \gamma)}.$$
\end{cor}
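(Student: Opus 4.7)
My plan is to reduce the corollary to Lemma~\ref{c1g} (or rather its proof) by a standard rescaling. Set
$$\tilde u(X) := r^{-1/2}\, u(x_0 + rX),$$
defined on $\tilde B_2$. The scaling relation for $E$ recorded at the beginning of Section 2 shows that $\tilde u$ is an almost minimizer for $E$ in $\tilde B_1$ with exponent $\beta$ and constant $\kappa r^\beta \le \kappa_0$. The hypothesis $\|u\|_{C^{1/2}(B_1)} \le C$ is preserved by this rescaling, so $\|\tilde u\|_{C^{1/2}(\tilde B_1)} \le C$; combined with the Caccioppoli bound described in the paragraph after Theorem~\ref{weak_main_thin} this yields $E(\tilde u,\tilde B_1) \le C_0$ universal. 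The tangency assumption $\mathcal B_r(x_0)\subset\{u=0\}$ translates to $\tilde u\equiv 0$ on $\tilde{\mathcal B}_1$.

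Next I would apply the proof of Lemma~\ref{c1g} to $\tilde u$ on $\tilde B_{1/2}^+$. Although the lemma is stated under the assumption that $u$ vanishes on the entire slit $\{x_{n+1}=0\}$, this hypothesis enters only when comparing $u$ with a harmonic replacement inside a half-ball $B_\rho^+(X)$ centered at some $X\in\{x_{n+1}=0\}$: there the replacement is taken to vanish on the flat piece $\mathcal B_\rho(X)$, which requires $\mathcal B_\rho(X)$ to lie inside the zero set of $u$. Restricting the Morrey–Campanato iteration to centers $X\in\tilde B_{1/2}^+\cup\tilde{\mathcal B}_{1/2}$ and scales $\rho\le 1/2$, the flat pieces $\mathcal B_\rho(X)$ are contained in $\tilde{\mathcal B}_1\subset\{\tilde u=0\}$, so the same competitors remain admissible. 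The induction on radii $\rho=\eta^k$ then proceeds verbatim as in the proof of Lemma~\ref{c1g}, producing
$$[\tilde u]_{C^{1,\gamma}(\tilde B_{1/2}^+)}\le C(\beta).$$

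Finally, unscaling using $\nabla \tilde u(X) = r^{1/2}\nabla u(x_0+rX)$ converts the bound on $[\tilde u]_{C^{1,\gamma}}$ into $[u]_{C^{1,\gamma}(B_{r/2}^+(x_0))}\le C_1\, r^{-(1/2+\gamma)}$, which is exactly the desired estimate. The only point requiring care is verifying that the half-ball step of the iteration in Lemma~\ref{c1g} survives when the zero set is just the flat ball $\tilde{\mathcal B}_1$ rather than the whole hyperplane; the restriction $\rho\le 1/2$ on the iteration scales makes this automatic, so no genuine obstacle arises.
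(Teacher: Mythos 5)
Your rescaling argument is essentially the paper's proof: set $\tilde u(X)=r^{-1/2}u(x_0+rX)$, note it is an almost minimizer with constant $\kappa r^\beta\le\kappa_0$ and bounded energy, and rescale back the $C^{1,\gamma}$ estimate of Lemma~\ref{c1g}. Your extra care about the zero set is not actually needed: the rescaled ball $\mathcal B_r(x_0)$ becomes precisely the slit $\mathcal B_1$ of the unit ball in which Lemma~\ref{c1g} is invoked, so its hypothesis ``$u=0$ on $x_{n+1}=0$'' holds verbatim for $\tilde u$ and the lemma applies directly, without rerunning its Morrey--Campanato iteration with restricted centers and scales.
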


Indeed, assume $x_0=0$ for simplicity. Then the rescaled function
$$\tilde u (X):= r^{-1/2} u(rX),$$ 
is an almost minimizer with constant $\tilde \kappa=\kappa r^\beta \le \kappa_0$, and then the conclusion follows by rescaling back the $C^{1,\gamma}$ estimates for $\tilde u$.

Next we use this boundary $C^{1,\gamma}$ estimate to establish the optimal growth of $u$ away from a free boundary point.

\begin{lem}\label{l55}
Assume that $u$ is an almost minimizer to $E$ in $B_1$ (with constant $\kappa$ small and exponent $\beta$), $\|u\|_{C^{1/2}(B_1)} \le C$ universal, and $0 \in F(u)$. Then
$$u(X) \ge c \, \,  d_Z(X) \, |d_F(X)|^{-1/2} \quad \quad \mbox{in $B_{1/2}$,}$$
where $d_Z(X)$, $d_F(X)$ represent the distance form $X$ to the zero set of $u$, $Z:=\{u=0\} \cap \{x_{n+1}=0\}$ respectively the free boundary $F(u)$, and $c=c(n)>0$ is small.
\end{lem}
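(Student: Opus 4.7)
The plan is to split $X \in B_{1/2}$ with $d_Z(X) > 0$ into two regimes according to the size of $d_Z(X)$ relative to $R := d_F(X)$, and to combine the boundary $C^{1,\gamma}$ regularity (Corollary \ref{c1g2}) with a Hopf-type lower bound on the vertical derivative at interior zero points. Throughout let $x_0 \in Z$ be a nearest point of $Z$ to $X$ and set $r := d_F(x_0)$.

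\emph{Case 1 ($d_Z(X) \ge R/4$).} Here $R \asymp d_Z(X)$ and the target reduces to $u(X) \ge c R^{1/2}$. The closest free boundary point $y^* \in F(u)$ satisfies $|y^* - X| = R$; strong non-degeneracy (Lemma \ref{snd}) produces $Y_0 \in B_R(y^*)$ with $u(Y_0) \ge c_0 R^{1/2}$. By $x_{n+1}$-symmetry and the $C^{0,1/2}$ bound one may assume $Y_0 \in \{x_{n+1}>0\}$, where $u$ is strictly positive and almost-harmonic. A finite Harnack chain at scale $\asymp R$ in the upper half-space, each link a direct application of Corollary \ref{lhi_thin} with $w \equiv 0$, propagates this lower bound to $X$.

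\emph{Case 2 ($d_Z(X) < R/4$).} Now $r \asymp R$ and $\mathcal B_r(x_0) \subset Z$ is a tangent $n$-dimensional ball. The horizontal projection of $X$ lies inside $Z$, hence $X - x_0$ is along $e_{n+1}$ with length $d_Z(X)$ (positive sign by symmetry). Corollary \ref{c1g2} yields $[u]_{C^{1,\gamma}(B^+_{r/2}(x_0))} \le C_1 r^{-(1/2+\gamma)}$, and since $u(x_0)=0$ with vanishing tangential derivatives on $\mathcal B_{r/2}(x_0)$, Taylor's theorem gives
\[
u(X) \ge \partial_{n+1}^+ u(x_0)\, d_Z(X) - C_1\, r^{-(1/2+\gamma)}\, d_Z(X)^{1+\gamma}.
\]
It thus suffices to prove the Hopf-type bound $\partial_{n+1}^+ u(x_0) \ge c^* r^{-1/2}$ with $c^* > 0$ universal; the Taylor error is absorbed when $d_Z(X)/r$ is small, a threshold compatible with Case 1 (the remaining sliver is handled by Case 1 after adjusting the cutoff).

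For the Hopf estimate I rescale $v(Y) := r^{-1/2} u(x_0 + rY)$ so that $v$ is an almost minimizer with $v \equiv 0$ on $\mathcal B_{1/2}$ and a free boundary point at distance $1$ from $0$; I need $\partial_{n+1}^+ v(0) \ge c^*$ universal. First, the Case 1 argument applied to $v$ produces a fixed interior anchor $v(\tfrac14 e_{n+1}) \ge c^{**}$. Next, comparing $v$ in $B^+_{1/4}$ with its harmonic replacement $h$ (with $h = 0$ on $\mathcal B_{1/4}$ and $h = v$ on the spherical cap) yields $\|v-h\|_{L^\infty} \le c(\sigma)$ with $c(\sigma) \to 0$, via Remark \ref{one_side}. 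The classical Hopf lemma for $h$ gives $\partial_{n+1} h(0) \ge c_0^\#$ universal, while the standard interpolation
\[
|\partial_{n+1}(v-h)(0)| \le C\,\|v-h\|_{L^\infty}^{\gamma/(1+\gamma)}\,\|v-h\|_{C^{1,\gamma}}^{1/(1+\gamma)}
\]
is arbitrarily small once $\sigma$ (and hence the rescaled $\kappa$) is small, so $\partial_{n+1}^+ v(0) \ge c_0^\#/2$.

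The principal obstacle is precisely this Hopf-type lower bound at interior zero points of $u$: it must be pieced together from strong non-degeneracy, Harnack propagation through the upper half-space, harmonic comparison via Remark \ref{one_side}, and a $C^{1,\gamma}$ interpolation transfer, with all constants kept universal and all smallness conditions compatible. Matching the threshold $d_Z(X) < R/4$ in Case 2 with the absorption of the Taylor error is the delicate bookkeeping, but it is straightforward once the scale-invariance is set up correctly.
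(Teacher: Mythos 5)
Your overall skeleton (two regimes according to $d_Z$ versus $d_F$, with non-degeneracy driving the far regime and a $C^{1,\gamma}$-plus-Hopf argument in the near regime) is the same as the paper's, and your Case 2 is essentially the paper's second case. But Case 1 — which your Hopf step also relies on for the interior anchor — has a genuine gap. You propagate the lower bound from the non-degeneracy point $Y_0$ to $X$ by a Harnack chain applied to $u$ itself, each link being Corollary \ref{lhi_thin} with $w\equiv 0$. Every such link requires the link ball to be contained in $\{u>0\}$, and you justify this by asserting that $u$ is strictly positive in $\{x_{n+1}>0\}$. For minimizers this follows from harmonicity off the slit and the strong maximum principle, but an almost minimizer is not harmonic there, and no such positivity statement is available a priori; indeed a quantified positivity of $u$ away from $Z$ is essentially part of the conclusion of Lemma \ref{l55}, so invoking it is circular. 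Nor can the chain be made self-sustaining: from $u(Y)\ge \mu$ and the $C^{0,1/2}$ bound you only get positivity in a ball of radius $\sim(\mu/C)^2$, and each application of Corollary \ref{lhi_thin} multiplies the lower bound by a small universal constant $c$, so after $k$ steps the admissible step length is $\sim c^{2k}R$ and the total reach of the chain is a small multiple of $R$ — not enough to span the distance $\sim R$ from $Y_0$ to $X$. The same objection applies to the anchor $v(\tfrac14 e_{n+1})\ge c^{**}$ in your Hopf step.

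The missing idea is to do the Harnack propagation on an exactly harmonic comparison function rather than on $u$: after normalizing so that $d_F(X_0)=|X_0|=1/2$ with the nearest free boundary point at the origin, let $h$ be the harmonic replacement of $u$ in $B_1\setminus Z$. Since $h$ vanishes on $Z$, its positivity set on the slit is no larger than that of $u$, so almost minimality gives $\int_{B_1}|\nabla(u-h)|^2\le \kappa$, and Poincar\'e plus the uniform H\"older bounds yield $\|u-h\|_{L^\infty}\to 0$ on $\{d_Z\ge 1/10\}$ as $\kappa\to 0$. Non-degeneracy (Lemma \ref{snd}) together with the bound $u\le C\,d_Z^{1/2}$ (from $Z\subset\{u=0\}$ and the $C^{0,1/2}$ norm) produces a point at universal distance from $Z$ where $h$ is bounded below; Harnack chains for the harmonic function $h$ in $B_1\setminus Z$ have universally controlled constants, and the bound is then transferred back to $u$. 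This is exactly how the paper handles the far regime, and it is also the cleanest way to fix a secondary inaccuracy in your Hopf step: Remark \ref{one_side} only gives the one-sided bound $v\le h+c(\sigma)$, whereas your interpolation inequality needs two-sided $L^\infty$ closeness of $v-h$; comparing with the harmonic replacement in the slit domain as in Lemma \ref{replace_thin} (the surface term is unchanged because $v$ already vanishes on the flat part) gives the two-sided estimate, after which your Hopf-plus-interpolation transfer coincides with the paper's argument.
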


\begin{proof}We fix a point $X_0$, and we will prove the inequality at this point. Without loss of generality, after a rescaling, we may assume that $d_{F}(X_0)=|X_0|=1/2$, and the distance from $X_0$ to $F(u)$ is realized at the origin. Then
\begin{equation}\label{5111} 
\int _{B_1}|\nabla (u-h)|^2  \le \kappa,
\end{equation} 
where $h$ is  the harmonic replacement of $u$ in $B_1 \setminus Z$. By Poincare inequality and the fact that $u$, $h$ are uniformly H\"older away from the boundary of $B_1\setminus Z$, we find that
$$\|u-h\|_{L^\infty} \to 0 \quad \mbox{as $\kappa \to 0$, in the set $B_1 \setminus \{d_Z \le 1/10 \}$. }  $$
By Lemma \ref{snd}, $u$ is nondegenerate, which combined with the inequality above and the Harnack inequality for $h$, gives the conclusion of the lemma if $d_Z(X_0) \ge 1/10$.

If $d_Z(X_0) < 1/10$, then in $B_{1/2}(X_0) \cap \{x_{n+1}=0\}$ both $u$ and $h$ vanish, and by Lemma \ref{c1g} we have
that $u-h$ is uniformly $C^{1,\gamma}$ in $B_{1/4}(X_0)$, on both sides of $x_{n+1}=0$. This together with inequality \eqref{5111} implies that
$$\|u-h\|_{C^{0,1}(B_{1/4}(X_0))} \to 0 \quad \mbox{as $\kappa \to 0$.}$$
By the Hopf lemma for $h$ we find $u \ge c |x_{n+1}|=c \, \, d_Z$ provided that we choose $\kappa$ sufficiently small, and the desired conclusion follows. 

\end{proof}

\section{Regularity of flat thin free boundaries}

In this section we begin our investigation of the regularity of the free boundary for almost minimizers. After multiplying $u$ by a suitable constant we may assume that  $E$ is given by 
$$E(u,\Omega) := \int_\Omega |\nabla u|^2 dX + \, \frac \pi 2 \, \mathcal{H}^n(\{(x,0) \in  \Omega : u(x,0)>0\}).$$
The factor $\pi/2$ is chosen such that the function $U(t,s)$ which is the harmonic extension of $\sqrt{t^+}$ to the upper half-plane $\R^{2}_+=\{(t,s) \in \R \times \R, s>0\}$ and then reflected evenly across $\{s=0\}$ is a global minimizer for $E$. In other words $U$ is the real part of $\sqrt z$, or in the polar coordinates $$t= \rho \cos\theta, \quad  s=\rho \sin\theta, \quad \rho \geq 0, \quad -\pi \leq  \theta \leq \pi,$$ $U$ is given by
\begin{equation}\label{U}U(t,s) = \rho^{1/2}\cos \frac \theta 2. \end{equation}
When we extend $U$ constantly in $n-1$ variable we obtain a global minimizer in $\R^{n+1}$. Precisely $$U(X):= U(x_n, x_{n+1}),$$ denotes the {\it trivial cone} in $\R^{n+1}$.

One of our main results is that an almost minimizer with small constant $\kappa$ which is sufficiently close to an optimal configuration $U$ in $B_1$ has a 
$C^{1,\alpha}$ free boundary in $B_{1/2}$.

\begin{thm}[Flatness implies regularity.] \label{flat_thin} Let $u$ be an almost minimizer to $E$ in $B_1$ (with constant $\kappa$ and exponent $\beta$), and let $\|u\|_{C^{0,1/2}(B_1)} \leq C$.
Assume $|u-U| \le \tau_0$ in $B_1$. If $\tau_0$ and $\kappa$ are small enough depending on $n$ and $\beta$, then $F(u)$ is $C^{1,\alpha}$ in $\mathcal B_{1/2}$, with $\alpha = \alpha(\beta,n)>0$.
\end{thm}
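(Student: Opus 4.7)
The plan is to follow the improvement-of-flatness scheme from \cite{DR}, adapted to the almost-minimizer setting. The core step is a one-step dichotomy: if $|u - U| \le \tau$ in $B_1$ with $\tau$ small and $\kappa$ sufficiently small relative to $\tau$, then at some universal scale $\rho$ there is a unit vector $\nu$ with $|\nu - e_n| \le C\tau$ such that
\begin{equation*}
|u(X) - U(x\cdot \nu, x_{n+1})| \le \tfrac{1}{2}\,\tau \,\rho^{1+\alpha}, \qquad X \in B_\rho,
\end{equation*}
for some $\alpha = \alpha(\beta,n) > 0$. Since rescalings of an almost minimizer are again almost minimizers, with the constant $\kappa$ replaced by $\kappa \rho^\beta$, iterating this dichotomy at each free boundary point in $\mathcal B_{1/2}$ produces a sequence of unit vectors $\nu_k$ with $|\nu_{k+1}-\nu_k|\le C\rho^{k\alpha}$, which is exactly the desired $C^{1,\alpha}$ regularity of $F(u)$.

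The one-step dichotomy I would prove by a compactness-contradiction argument. Assuming it fails along a sequence $u_k$ with flatness $\tau_k\to 0$, I consider the normalized defects $\tilde u_k := (u_k - U)/\tau_k$ in the positivity region of $U$. A partial Harnack inequality (see below) yields equicontinuity of $\tilde u_k$ up to the slit, so along a subsequence $\tilde u_k \to \tilde u_\infty$ locally uniformly. The limit $\tilde u_\infty$ solves the linearized problem of \cite{DR}: harmonic in $B_1\setminus\{x_n\le 0,\,x_{n+1}=0\}$, even across $\{x_{n+1}=0\}$, with the appropriate Neumann-type condition at $\{x_n=0,\,x_{n+1}=0\}$. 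The $C^{1,\alpha}$ estimate for this linearized problem, available from \cite{DR}, provides a tangential linear approximation of $\tilde u_\infty$ near the origin; translating that approximation back to $u_k$ for $k$ large produces the improved flatness, contradicting the failure assumption.

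The main obstacle is the partial Harnack inequality itself. Because almost minimizers do not satisfy any infinitesimal PDE, the classical viscosity comparison technology fails outright. The remedy, as the introduction outlines, is the noninfinitesimal viscosity-solution framework of \cite{DS4}: I need to show that an almost minimizer obeys comparison with smooth strict sub/supersolutions inside a ball whose radius is controlled quantitatively by $\kappa$, $\beta$, and the ellipticity of the test function. My plan is to establish this by testing the energy inequality \eqref{almost_min} against a competitor that lifts $u$ by a smooth strict subsolution $\varphi$ in a neighborhood of a touching point, balancing the strict-subsolution gain against the error $\kappa r^\beta$, and handling behavior near the thin set using the harmonic-replacement bound of Lemma \ref{replace_thin}, the nondegeneracy of Lemma \ref{snd}, the optimal growth of Lemma \ref{l55}, and the boundary $C^{1,\gamma}$ estimate of Lemma \ref{c1g}. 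The delicate bookkeeping step is ensuring that the Harnack gain per scale dominates the loss $\kappa r^\beta$ uniformly at every scale encountered in the iteration; this constraint is precisely what forces the exponent $\alpha$ in the theorem to depend on $\beta$.
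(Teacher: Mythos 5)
Your overall scheme is the one the paper actually follows: improvement of flatness proved by compactness to the linearized problem \eqref{linear} of \cite{DR}, with the key new ingredient being a noninfinitesimal comparison property of almost minimizers obtained from energy competitors, and an iteration in which the almost-minimality error $\kappa\rho^{k\beta}$ must be beaten by a power of the flatness at every scale (this is indeed why $\alpha$ depends on $\beta$). However, two steps as you state them would not go through. First, the compactness step: the normalized defect $(u_k-U)/\tau_k$ is \emph{not} bounded, let alone equicontinuous, up to the slit. If $u_k$ is trapped between $U(X\pm\eps e_n)$, then at distance $r$ from $L$ one only has $|u_k-U|\lesssim \eps\, U_n\sim \eps\, r^{-1/2}$, which blows up at $L$ after division by $\eps$ (or by $\tau_k$); equivalently, if the free boundaries of $u_k$ and of $U(x\cdot\nu,x_{n+1})$ are at distance $d$, then $\sup|u_k-U(x\cdot\nu,\cdot)|\gtrsim d^{1/2}$, not $d$. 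For the same reason your one-step dichotomy cannot be phrased as $|u-U(x\cdot\nu,x_{n+1})|\le\frac12\tau\rho^{1+\alpha}$ in $B_\rho$. The paper, following \cite{DR}, measures flatness by trapping between translates of $U$ (subsection \ref{s6.1} converts the $L^\infty$ hypothesis into \eqref{600}, and the improvement is \eqref{flatimp2}), and passes to the limit not with the difference quotient but with the $\eps$-domain variation $\tilde u$ defined by $U(X)=u(X-\eps\tilde u(X)e_n)$, which satisfies $|\tilde u|\le1$; the partial Harnack property (P1) then gives compactness of the multivalued graphs of $\tilde u_k$, and the limit $h$ solves \eqref{linear}.

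Second, the comparison step is where the real work lies, and ``lifting $u$ by a smooth strict subsolution'' is not adequate at the thin free boundary: $u$ is only $C^{0,1/2}$ there and the free boundary condition is of the type $\partial_{\sqrt t}u=1$, so the admissible test objects are the explicit profiles $V_{M,\xi',a}$ of \cite{DS2} (Lemma \ref{l63}), and touching need only be excluded when the contact point lies on $L$ (property (P2)(b)); in the interior and on $P\setminus L$ the harmonic replacement bound \eqref{680} already suffices. The paper's mechanism (Lemmas \ref{l613} and \ref{l61}) is quantitative: perturb $V$ to a strictly better subsolution $\bar V$, take $\max\{u,\bar V\}$ as competitor so that almost minimality yields $E(u_{\min},B_1)-E(\bar V,B_1)\le\sigma$, observe that $\bar V$ minimizes the auxiliary energy $E(v,B_1)+\mu^{2}\int_{B_1}v\,|x_{n+1}|\,dX$ among constrained competitors, hence $E(u_{\min},B_1)-E(\bar V,B_1)\ge\mu^{2}\int_{B_1}(\bar V-u_{\min})|x_{n+1}|\,dX$, and finally use the tangency together with (H3)--(H4) (your Lemmas \ref{c1g2} and \ref{l55}) to get the lower bound $\int_{B_1}(\bar V-u_{\min})|x_{n+1}|\,dX\ge\mu^{C(\gamma)}$, contradicting $\sigma\le\mu^{C}$. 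This explicit ``gain versus $\sigma$'' estimate, together with the normalizations of subsections \ref{s6.1}--\ref{s6.2} and the bookkeeping $\sigma\le\eps^{C}$ in the iteration, is the content of the proof; without it your plan remains an outline rather than an argument.
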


\subsection{Translations of $U$}\label{s6.1} We start by normalizing our flatness assumption, that is 
we show that if $u$ is close to $U$ in $L^\infty$ then we can always trap it between two small translates of $U$. Precisely if 
\begin{equation}\label{600.0}
\|u-U\|_{L^\infty(B_2)} \le \tau, \quad \mbox{small},
\end{equation}
then it follows that 
\begin{equation}\label{600}
U(X-\eps e_n) \le u \le U(X+\eps e_n) \quad \mbox{in $B_1$}, 
\end{equation}
with $\eps=\eps(\tau) \to 0$ as $\tau \to 0$, provided that the constant $\kappa=\kappa (\tau)$ is sufficiently small.

The inequality \eqref{600} is a consequence of \eqref{600.0} away from a small neighborhood of the set $U=0$. We need to show the inequality is satisfied also in this neighborhood.

Fix $\eps>0$ small. The non-degeneracy property of $u$ implies that its free boundary lies in the strip $|x_n| \le \eps/2$.
Let $h$ be the harmonic replacement of $u$ in $B_{3/2}^+$. 
Since $u$ vanishes on $\mathcal B_{3/2} \cap \{x_n \le -\eps/2\}$, by Lemma \ref{c1g}, the $C^{1,\gamma}$ norm of $u$ is bounded in a small neighborhood $\mathcal U$ of $\mathcal B_1 \cap \{x_n \le - \eps\}$ by a constant $C(\eps)$. The same holds for $h$, and since  
$$\int_{B_{3/2}^+}|\nabla (u-h)|^2 \to 0 \quad \mbox{as $\kappa \to 0$,}$$
we find $$\|u-h\|_{C^{0,1}(\mathcal U)} \to 0.$$
Moreover, $\|h-U\|_{L^\infty} \to 0$ as $\tau \to 0$, hence by the regularity of harmonic functions 
$$\|U-h\|_{C^{0,1}(\mathcal U)} \to 0.$$
Combining these inequalities we find that the Lipschitz norm of $u-U$ tends to $0$  as $\tau, \kappa \to 0$, and this gives the desired inequality \eqref{600} in $\mathcal U$ as well.

\subsection{Reduction to the case when $u$ is even in $x_{n+1}$.}\label{s6.2} In order to apply directly the methods from \cite{DR} to the study of almost minimizers it is convenient to reduce our analysis to the case of functions that are symmetric with respect to the last variable. We remark however that this reduction is not really necessary, as the arguments in \cite{DR} can be extended to the nonsymmetric case without much difficulty.

 We show that we may replace $u$ by its even part with respect to $x_{n+1}=0$,
$$u_e(x,x_{n+1}):= \frac 12 \left( u(x,x_{n+1}) + u(x,-x_{n+1}\right ),$$
and the key properties are preserved.

Suppose that $E(u,1) \le C$ and $u$ satisfies (see \eqref{en1}-\eqref{en2})
\begin{equation}\label{601}
E(u,B_1) \le E(v,B_1) + C \sigma, \quad \mbox{whenever $v=u$ on $\p B_1$,}
\end{equation}
for some small constant $\sigma$. Then we claim that $u_e$ satisfies the same inequality with constant $\sigma^{1/3}$ instead of $C \sigma$.

Indeed, let $\bar u$ represent the harmonic replacement of $u$ in $B_1^+ \cup B_1^-$ which does not change the values of $u$ on $x_{n+1}=0$. Then, using \eqref{601} we find
$$\|\nabla (u-\bar u)\|^2_{L^2} \le C \sigma.$$
Clearly this inequality holds for the odd parts of $u$ and $\bar u$ as well. 

Let $u_o=u-u_e$ denote the odd part of $u$. Notice that $\bar u_o$ is an odd harmonic function in whole $B_1$. We write $u_e=u-\bar u_o + \bar u_o - u_o$ and use that 
$$ \|\nabla (u -\bar u_o)\|^2_{L^2} \le C , \quad \|\nabla (\bar u_o- u_o)\|^2_{L^2} \le C \sigma,$$
to obtain
$$E(u_e,B_1) \le E(u-\bar u_o,B_1) + C \sigma ^{1/2}.$$  
A function which equals $u_e$ on $\p B_1$ can be written as $v-\bar u_o$ with $v=u$ on $\p B_1$. Since 
$$\int_{B_1}  (|\nabla (u-\bar u_o)|^2 - |\nabla (v-\bar u_o)|^2) dX =\int_{B_1}  (|\nabla u|^2 - |\nabla v|^2) dX,$$
we find
$$E(u-\bar u_o, B_1) = E(v-\bar u_o) + E(u,B_1) - E(v,B_1) \le E(v-\bar u_o,B_1) + C \sigma,$$
and in the last inequality we used the hypothesis on $u$.
In conclusion $$E(u_e,B_1) \le  E(v-\bar u_0) + C \sigma^{1/2}$$
which proves our claim.

\subsection{Improvement of flatness lemma}

For the remaining of this section we assume that in $B_1$ the function $u$ satisfies

$(H1)$ $u$ is even in $x_{n+1}$, $\|u \|_{C^{0,{1/2}}} \le C,$

$(H2)$ $$E(u,B_1) \le  E(v,B_1) + \sigma,$$
for some $\sigma$ small, 

$(H3)$ in any $n$-dimensional ball $\mathcal B_r(X_0)$ included in $Z:=\{u=0\} \cap\{x_{n+1}=0\}$ which is tangent to the free boundary we have 
$$[u]_{C^{1,\gamma} } \le C r^{-(\frac 12 + \gamma)} \quad \quad  \mbox{in} \quad B_{r/2}^+(X_0).$$

$(H4)$ $$u \ge c \, d_Z d_F^{-1/2}$$ where $d_Z$ and $d_F$ represent the distances to the zero set $Z$, and  respectively the free boundary $F(u)$.

We remark that $(H3)$ and $(H4)$ hold for almost minimizers by Corollary \ref{c1g2} and Lemma \ref{l55}, and they remain valid after symmetrizing $u$ in the $x_{n+1}$ variable as in subsection \ref{s6.2}.

Theorem \ref{flat_thin} follows from the improvement of flatness lemma below.

\begin{lem}\label{limp}
Let $u$ satisfy $(H1)$-$(H4)$. Assume that $0 \in F(u)$ and 
 $$ U(X-\eps e_n) \le u \le U(X+\eps e_n).$$
Given $\alpha \in (0,1)$ there exists $\eta$ depending on $n$ and $\alpha$ such that if $\sigma \le \eps^{C}$ for some large constant $C=C(\beta,n)$, then 
\begin{equation}\label{flatimp2}U(x \cdot \nu  - \eps \eta^{1+\alpha}, x_{n+1}) \leq u(X) \leq U(x\cdot \nu+\eps \eta^{1+\alpha}, x_{n+1}) \quad \textrm{in $B_\eta$},\end{equation} for some direction $\nu \in \R^n, |\nu|=1,$ 
provided that $\eps \le \eps_0(\alpha, n, \beta)$ sufficiently small.
\end{lem}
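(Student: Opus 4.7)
I would argue by contradiction following the improvement-of-flatness scheme of \cite{DR, DS1}, adapted to the almost minimizer setting via the two-scale viscosity comparison of \cite{DS4}. Suppose the conclusion fails: there exists a sequence $u_k$ satisfying $(H1)$--$(H4)$ trapped between $U(X-\eps_k e_n)$ and $U(X+\eps_k e_n)$ with $\eps_k \to 0$ and $\sigma_k \le \eps_k^{C}$, for which no improved inclusion of the form \eqref{flatimp2} holds in any direction $\nu$. Define the tilt/rescaled function
\begin{equation*}
\tilde u_k(X) := \frac{u_k(X) - U(X)}{\eps_k},
\end{equation*}
on the set where this is meaningful, together with a suitable normalization by $\rho^{1/2}$ near the edge $\{\rho = 0\}$. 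The goal is to show $\tilde u_k$ converges, up to subsequence, to a solution of the linearized problem of Section 4, whose $C^{1,\alpha}$ regularity at $0$ supplies the improvement.

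First, I would establish a Harnack-type inequality for $u$ trapped between two translates of $U$: if $U(X-\eps e_n) \le u \le U(X + \eps e_n)$ in $B_1$ with $\eps$ small, then in a smaller ball $B_\rho$ the enveloping translates can be brought closer by a fixed universal factor. Away from a neighborhood of the edge, $U$ is smooth and positive, so the energy inequality $(H2)$ combined with the harmonic replacement comparison (Lemma \ref{replace_thin}, Remark \ref{one_side}, Corollary \ref{lhi_thin}) yields the classical De Silva Harnack step, with an almost-minimality error controlled by $\sigma$. In the region close to the free boundary one uses the sharp bounds $(H3)$--$(H4)$: along any $n$-dimensional ball in $Z$ tangent to $F(u)$, the $C^{1,\gamma}$ control from Corollary \ref{c1g2} shows $u$ is a perturbation of $U$ at the correct scale, while the non-degeneracy Lemma \ref{l55} prevents $u$ from detaching from $U$. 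The hypothesis $\sigma \le \eps^C$ is precisely what guarantees that the almost-minimality error at scale $\eps$ is negligible compared to the tilt $\eps$ throughout these comparisons.

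Iterating Harnack gives that $\tilde u_k$ is uniformly H\"older on compact subsets of $B_1 \setminus \{\rho = 0\}$ (and uniformly bounded up to the edge after the $\rho^{1/2}$ normalization). By Arzel\`a--Ascoli, a subsequence converges locally uniformly to some $\tilde u_\infty$. Passing the almost-minimality inequality and the viscosity comparison of \cite{DS4} to the limit, $\tilde u_\infty$ solves the linearized problem studied in Section 4: it is harmonic in $\R^{n+1} \setminus \{x_n \le 0, \, x_{n+1} = 0\}$, even in $x_{n+1}$, and satisfies the natural Neumann-type condition on the slit obtained by linearizing the free boundary condition carried by $U$ under tangential translations. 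By the regularity theory for this linearized problem, there exists $q \in \R^{n-1}$ and a universal constant such that
\begin{equation*}
\bigl|\tilde u_\infty(X) - q \cdot x' \bigr| \le C_0 \, \eta^{1+\alpha_0} \quad \text{in } B_\eta,
\end{equation*}
for $\alpha_0 \in (0,1)$ universal and every small $\eta$. Unwinding the rescaling, the direction $\nu_k := (\eps_k q, 1)/|(\eps_k q, 1)| \in S^{n-1}$ produces the improved trapping
\begin{equation*}
U(x\cdot \nu_k - \eps_k \eta^{1+\alpha}, x_{n+1}) \le u_k(X) \le U(x\cdot \nu_k + \eps_k \eta^{1+\alpha}, x_{n+1}) \quad \text{in } B_\eta,
\end{equation*}
for any $\alpha < \alpha_0$ and $k$ large, once $\eta$ is fixed small depending on $\alpha$ and $n$. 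This contradicts the failure assumption and proves the lemma.

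The main obstacle is the Harnack step. Because $u$ is an almost minimizer rather than a viscosity solution, the infinitesimal comparison principle underpinning the standard Harnack inequality of \cite{DR} is unavailable; instead one must use the macroscopic (two-scale) viscosity comparison of \cite{DS4}, valid only in a neighborhood of a touching point whose size depends on the test function. Making the iteration produce a genuine geometric decay of flatness, rather than being swallowed by the almost-minimality defect, requires the quantitative balance $\sigma \le \eps^{C}$ with $C=C(\beta,n)$ chosen large enough that at every scale between $1$ and $\eps$ the error from $(H2)$ is dominated by the tilt. This quantitative bookkeeping is the delicate point; once it is in place, the compactness and linearization steps proceed as in \cite{DR, DS1}.
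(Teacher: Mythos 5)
Your overall skeleton coincides with the paper's: trap $u$ between translates of $U$, pass to a normalized (hodograph-type) limit along a sequence $\eps_k\to 0$, show the limit solves the linearized problem $\Delta(U_n h)=0$ in $B_1\setminus P$ with $|\nabla_r h|=0$ on $L$, invoke the $C^{1,\alpha}$ estimate of \cite{DR} (Theorem \ref{classnewcor}), and unwind; the balance $\sigma\le\eps^C$ plays exactly the role you describe. The cosmetic difference (difference quotient $(u-U)/\eps$ with a $\rho^{1/2}$ normalization versus the $\eps$-hodograph transform $U(X)=u(X-\eps\tilde u(X)e_n)$) is harmless.

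The genuine gap is that the two steps on which everything hinges near the edge $L$ are asserted rather than proved, and they are not available from \cite{DS4}. First, the verification that the limit $h$ satisfies the viscosity condition $|\nabla_r h|=0$ on $L$ requires a comparison principle for the almost minimizer against the explicit family $V_{M,\xi',a}\in\mathcal V_\mu$ of Lemma \ref{l63}: precisely the statement that if $u\ge V_{M,\xi',a}$ in $B_1$ with $a-\mathrm{tr}\,M\ge\mu$ and $\sigma\le\mu^{C}$, then $u$ lies strictly above a translate of $V$ in $B_{1/4}$ (Lemmas \ref{l613} and \ref{l61}). This is the main new content of the paper's proof and it is not a consequence of the quasi-Harnack framework of \cite{DS4}, which you cite for it; it is proved here by an energy comparison in which $\bar V$ (a slightly bent translate) minimizes the penalized functional $E(v,B_1)+\mu^2\int_{B_1} v\,|x_{n+1}|\,dX$ among competitors constrained between $\min\{V,\bar V\}$ and $\bar V$, and the tangency point is converted, via $(H3)$--$(H4)$, into a quantitative lower bound $\int(\bar V-u_{\min})|x_{n+1}|\,dX\ge\mu^{C(\gamma)}$ that contradicts almost minimality when $\sigma\le\mu^C$. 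Second, your Harnack step near the slit also rests on this same comparison (the paper's Case 3 in checking property (P1) uses Lemma \ref{l61} with the subsolution $V_{\eps I,0,2n\eps}$); harmonic replacement plus $(H3)$--$(H4)$ only handles balls away from $L$ or centered on $P\setminus L$. Without the edge Harnack your compactness gives a limit that is not known to be continuous up to $L$, so the boundary condition cannot even be tested. In short, the plan is the right one, but the comparison lemma at the free boundary --- the heart of the matter --- is missing and must be supplied by the energy-penalization argument above rather than by citation.
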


We explain how Lemma \ref{limp} implies Theorem \ref{flat_thin}. Assume $0 \in F(u)$ and then it suffices to show by induction that for a sequence of radii $r=\eta^k$ we can find unit directions $\nu_r$ such that
$$ U(x \cdot \nu_r  - \eps_0 r^{1+\alpha}, x_{n+1}) \leq u_e(X) \leq U(x\cdot \nu_r+\eps_0 r^{1+\alpha},x_{n+1}) \quad \mbox{in $B_r$},$$
where $u_e$ denotes the even part of $u$ with respect to the last coordinate.

The case $k=0$ is guaranteed by subsection \ref{s6.1}. Assume the conclusion holds for some $r$, and say $\nu_r=e_n$. Then the rescaling $$\tilde u(X)=r^{-1/2} u(rX)$$ satisfies \eqref{en2} with $\sigma= \kappa r^\beta$ thus, by subsection \ref{s6.2}, its even part $\tilde u_e$ satisfies H1)-H4) above for $$\sigma=(\kappa r^\beta)^{1/3}.$$ On the other hand, by the induction step, $\tilde u_e$ satisfies the flatness hypothesis of Lemma \ref{limp} with $$\eps:=\eps_0 r^\alpha.$$ In order to apply Lemma \ref{limp} and obtain the desired conclusion in $B_{\eta r}$ we need to check $\sigma \le \eps^C$ which is satisfied if $\alpha$ and $\kappa$ are chosen sufficiently small.

The proof of Lemma \ref{limp} follows the lines of proof given for minimizers in \cite{DR}. We will complete its proof in the next section. In our setting the main step is to establish that $u$ is in an appropriate sense a viscosity solution to the thin one-phase problem. Then the remaining arguments carry through without much difficulty as in \cite{DR}.

\subsection{Comparison Subsolutions} Here we recall the definition and properties of a family of explicit subsolutions (and similarly supersolutions) introduced in \cite{DS2}. We later show that $u$ satisfies a version of the comparison principle with elements of this family.

We introduce the family $V_{\mathcal S, a}$ with $\mathcal S$ a smooth hypersurface in $\R^n$ and $a \in \R$. We remark that in \cite{DS2} we defined a slightly larger class $V_{\mathcal S,a,b}$ involving two real parameters $a$ and $b$ which was needed in order to establish the $C^{2,\alpha}$ estimates of viscosity solutions. For our purposes, it suffices to fix $b=0$.

 For any $a \in \R$ we define the following family of two-dimensional profiles given in polar coordinates $(\rho, \theta)$ in a plane of coordinates $(t,s)$:
\begin{equation}
v_{a}(t,s):= \left(1+\frac{a}{4}\rho \right) \, \, \rho^{1/2} \, \, \cos \frac{\theta}{2},
\end{equation}
that is 
 $$v_{a}(t,s) = \left(1+\frac{a}{4}\rho \right)U(t,s) = U(t,s) + o(\rho^{1/2}).$$

Given a $C^2$ surface $$\mathcal S= \{x_n = h(x')\} \subset \R^n,$$ and a point $X=(x,x_{n+1})$ in a small neighborhood of $\mathcal S$,  we call $\mathcal P_{\mathcal S,X}$ the 2D plane passing through $X$ and perpendicular to $\mathcal S$,
that is the plane containing $X$ and generated by the $x_{n+1}$-direction and the normal direction from $(x,0)$ to $\mathcal S$.

We define the family of functions 
\begin{equation}\label{vS}
V_{\mathcal S, a} (X): = v_{a}(t,s),
\end{equation}
with $t=\rho\cos \beta, s=\rho\sin \beta$ respectively the first and second coordinate of $X$ in the plane $\mathcal P_{\mathcal S,X}$. In other words, $t$ is the signed distance from $x$ to $\mathcal S$ (positive above $\mathcal S$ in the $x_n$-direction,) and $s= x_{n+1}$.

If $$\mathcal S:= \left \{ x_n = \xi'\cdot x' +\frac 1 2 (x')^T M x' \right \},$$ for some $\xi' \in \R^{n-1}$ and $M \in S^{(n-1) \times (n-1)}$  we use the notation \begin{equation}\label{vM}
 V_{M, \xi', a}(X):= V_{\mathcal S, a} (X). \end{equation}

For $\mu >0$ small, we define the following classes of functions $$\mathcal{V}_\mu: = \{V_{M, \xi',a} : \ 
\|M\|, |a|, |\xi'| \leq \mu\}.$$ 

Next lemma, which is Proposition 3.2 in \cite{DS2}, provides a condition for a function  $V \in \mathcal V_\mu$ to be a subsolution/supersolution.

\begin{lem}\label{l63} Let $V=V_{M,\xi',a} \in \mathcal V_\mu,$  with $\mu \leq \mu_0$ universal. There exists a universal constant $C_0>0$ such that if \begin{equation}\label{Vsub}a - tr M \geq C_0 \mu^2\end{equation} then  $V$ is a comparison subsolution to thin one-phase problem in $B_2$:
\begin{enumerate}
\item  $\Delta V \geq \mu^2 |x_{n+1}|$ in $B_2^+(V)$,

\item $\frac{\partial V}{\partial \sqrt t} =1$ on $F(V)$.
\end{enumerate} 

\end{lem}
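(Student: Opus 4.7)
The plan is to reduce everything to the two-dimensional profile $v_a$ by writing $V(X)=v_a(d(x),x_{n+1})$, where $d(x)$ is the signed distance in $\R^n$ from $x$ to $\mathcal S$, positive on the side $x_n>h(x')$. Because $d$ depends only on $x$, $|\nabla d|=1$ where smooth, and $\nabla d\perp e_{n+1}$, the chain rule collapses to
$$\Delta V = \Delta_{t,s} v_a(d,x_{n+1}) + v_{a,t}(d,x_{n+1})\,\Delta d(x).$$
The proof of (1) then consists of computing the three factors to appropriate order.

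\textbf{The three computations.} First, $U$ is harmonic and a direct polar calculation using $\Delta(\rho^k\cos(m\theta))=(k^2-m^2)\rho^{k-2}\cos(m\theta)$ with $k=3/2,\,m=1/2$ gives
$$\Delta_{t,s}v_a = \frac{a\cos(\theta/2)}{2\rho^{1/2}}.$$
Second, differentiating $v_a=(1+a\rho/4)U$ and using $\partial_t U = U/(2\rho) = \cos(\theta/2)/(2\rho^{1/2})$ together with $\partial_t\rho = \cos\theta$, a short algebraic manipulation yields
$$v_{a,t} \;=\; \cos(\theta/2)\left(\frac{1}{2\rho^{1/2}} + \frac{a\rho^{1/2}}{8}(2\cos\theta + 1)\right) \;=\; \cos(\theta/2)\Bigl(\tfrac{1}{2\rho^{1/2}} + O(|a|\rho^{1/2})\Bigr);$$
the decisive feature is that $\cos(\theta/2)$ is a common factor. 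Third, for the quadratic graph $\mathcal S=\{x_n = \xi'\cdot x' + \tfrac12(x')^T M x'\}$, the eikonal equation $|\nabla d|^2=1$ together with the mean-curvature formula for a graph gives
$$\Delta d(x) = -\mathrm{tr}(M) + O(\mu^2) \quad \text{uniformly in } B_2,$$
since $\Delta_{x'}h=\mathrm{tr}(M)$ exactly while the correction from $(1+|\nabla h|^2)^{-1/2}$ is $O(|\nabla h|^2)=O(\mu^2)$.

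\textbf{Combining.} Substituting yields
$$\Delta V = \frac{(a-\mathrm{tr}(M))\cos(\theta/2)}{2\rho^{1/2}} + O\!\bigl(\mu^2\rho^{-1/2}\cos(\theta/2)\bigr).$$
Under the hypothesis $a-\mathrm{tr}(M)\ge C_0\mu^2$ with $C_0$ sufficiently large (universal), the main term dominates the error and $\Delta V \ge c\,C_0\,\mu^2\rho^{-1/2}\cos(\theta/2)$ in $B_2^+(V)$. On the other hand, in the positivity set $\{\cos(\theta/2)>0\}$ one has $|x_{n+1}|=\rho|\sin\theta|=2\rho|\sin(\theta/2)||\cos(\theta/2)|\le 2\rho\cos(\theta/2)$, and for $\rho\le 2$ the elementary inequality $\rho\le 2\sqrt 2\,\rho^{-1/2}$ gives $|x_{n+1}|\le C\rho^{-1/2}\cos(\theta/2)$. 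Enlarging $C_0$ to absorb this geometric constant proves (1). For (2), $F(V)=\mathcal S\cap\{x_{n+1}=0\}$, where $d=0,\,x_{n+1}=0$, hence $\rho=0$; approaching along $s=0,\,t>0$ gives $\theta=0$ and $v_a(t,0)=(1+at/4)\sqrt t$, so $\partial v_a/\partial\sqrt t|_{t=0}=1$. Since $d$ has unit gradient normal to $\mathcal S$, this transfers directly to $\partial V/\partial\sqrt t=1$ on $F(V)$.

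\textbf{Main obstacle.} The one non-mechanical step is observing in the formula for $v_{a,t}$ that $\cos(\theta/2)$ factors out of \emph{every} term; this is what ensures the error $v_{a,t}\cdot O(\mu^2)$ produced by the curvature expansion of $\Delta d$ remains proportional to $\cos(\theta/2)$ and matches the main term, so that the inequality $\Delta V\ge\mu^2|x_{n+1}|$ survives uniformly up to the slit where $\cos(\theta/2)\to 0$. Everything else is bookkeeping of the orders of $\mu$ and $\rho$.
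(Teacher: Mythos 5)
Your proof is correct, and it follows essentially the argument behind this lemma: the paper does not prove it but cites Proposition 3.2 of \cite{DS2}, whose proof is the same direct computation you perform, and your intermediate estimate $\Delta V \ge c\,(a-\mathrm{tr}\,M)\,\rho^{-1/2}\cos(\theta/2)$ is precisely the bound $\Delta V \ge C\mu^2 |U_t| \ge \mu^2|x_{n+1}|$ that the paper invokes in its remark after the lemma. The only points worth making explicit are that the signed distance $d$ is smooth in $B_2$ with $\Delta d=-\mathrm{tr}\,M+O(\mu^2)$ because the curvatures of $\mathcal S$ are $O(\mu)$ (so there are no focal points at distance $O(1)$, and the correction $-\sum \kappa_i/(1-\kappa_i d)+\sum\kappa_i$ is also $O(\mu^2)$), and that the transfer in part (ii) from $v_a$ to $V$ uses $|d(x)-(x-x_0)\cdot\nu(x_0)|=O(|x-x_0|^2)$ together with the $1/2$-H\"older continuity of $U$ in $t$ to absorb the error into $o(|(x-x_0,z)|^{1/2})$.
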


The second property means that at any point $x_0 \in F(v)$ we have
$$V (x,z) = U((x-x_0) \cdot \nu(x_0), z)+ o(|(x-x_0,z)|^{1/2}), \quad \textrm{as $(x,z) \rightarrow (x_0,0),$}$$ where $\nu(x_0)$ denotes the unit normal at $x_0$ to $F(v)$ pointing toward $\mathcal{B}_2^+(V)$. 

We remark that property (i) is not stated precisely as above in Proposition 3.2 in \cite {DS2}, however it follows from its proof that $\triangle V \ge C \mu^2 |U_t| \ge \mu^2 |x_{n+1}|$.  

We think of the functions $V$ being obtained from $U$ through a domain deformation given by a map $X \mapsto X - \tilde V(X) e_n$, with $\tilde V(X)$ given implicitly as
$$ U(X)=V(X - \tilde V(X) e_n).$$
The choice of writing the perturbation in the $-e_n$ direction instead of the $e_n$ direction, is so that the transformation $V \rightarrow \tilde V$ preserves the ordering of functions: $V_1 \ge V_2 $ is equivalent to $\tilde V_1 \ge \tilde V_2$.  

We often work with translations of the graphs of the functions $V$. In terms of the hodograph transform $\tilde V$, a translation of the graph of $V$ by $-t_0 e_n$ corresponds to adding $t_0$ to $\tilde V$.

We recall Proposition 3.5 in \cite{DS2} which gives an estimates for $\tilde V$ up to order $O(\mu^2)$.

\begin{lem}\label{l64} Let $V=V_{M,\xi',a} \in \mathcal V_\mu$. Then $\tilde V$ satisfies the following estimate in $B_{2}$ 
\begin{equation*}
|\tilde V(X) - \gamma_V (X)| \leq C_1 \mu^2, \quad \gamma_V (X)= - \, \xi' \cdot x' -\frac 1 2 (x')^T M x' + \frac{a}{2}(x_n^2+x_{n+1}^2)
\end{equation*} with $C_1$ a universal constant.
\end{lem}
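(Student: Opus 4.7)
The approach is to solve the implicit equation $U(X) = V(X - \tilde V(X) e_n)$ by perturbation in the small parameter $\mu$, and to identify $\gamma_V$ as the leading-order linear-in-$\mu$ part. Writing $\phi(x') := \xi' \cdot x' + \frac{1}{2}(x')^T M x'$ so that $\mathcal S = \{x_n = \phi(x')\}$, and using that the signed distance from $(x',y)$ to $\mathcal S$ is $y - \phi(x') + O(\mu^2)$ uniformly on $B_2$, the explicit form $v_a(\tau,s) = (1+\frac{a}{4}\sqrt{\tau^2+s^2})\,U(\tau,s)$ lets me rewrite the defining relation as
\begin{equation*}
U(x_n,x_{n+1}) \;=\; \Bigl(1 + \tfrac{a}{4}\rho\Bigr)\, U(t,x_{n+1}),\qquad \rho = \sqrt{t^2+x_{n+1}^2},
\end{equation*}
with $t = (x_n - \tilde V) - \phi(x') + O(\mu^2)$. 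I would treat this as an implicit equation for $\tilde V$ on $B_2$.

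Next I would make the ansatz $\tilde V = \gamma_V + E$ and extract the linear-in-$\mu$ balance. A direct polar-coordinate computation from $U = \rho^{1/2}\cos(\theta/2)$ yields the pointwise identity $U_{x_n}(x_n,x_{n+1}) = U/(2\rho_0)$, where $\rho_0 := \sqrt{x_n^2 + x_{n+1}^2}$. Taylor expanding $U(t,x_{n+1})$ to first order around $(x_n,x_{n+1})$ and combining with the factor $1 + \frac{a}{4}\rho$ reduces the equation at leading order to a linear balance whose unique solution is $t - x_n = -\frac{a}{2}\rho_0^2 + O(\mu^2)$; substituting the expression for $t$ then yields $\tilde V = -\phi(x') + \frac{a}{2}\rho_0^2 + O(\mu^2) = \gamma_V + O(\mu^2)$, which is exactly the claim.

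The main difficulty I expect is the singularity of $U$ along $F(U)$, where the higher derivatives blow up like $\rho_0^{-3/2}, \rho_0^{-5/2}, \ldots$, so the Taylor remainder cannot be controlled by $\|D^2 U\|_{L^\infty}$. However, the shift $t - x_n$ is itself of order $\mu \rho_0^2$ on the relevant scales, so by the $1/2$-homogeneity of $U$ the quadratic Taylor contributions are scale-invariantly $O(\mu^2 \rho_0^{5/2})$, which after dividing by the natural size $U \sim \rho_0^{1/2}$ of the left-hand side produce a relative error of size $\mu^2 \rho_0^2 = O(\mu^2)$ on $B_2$. Making this rigorous via a scale-invariant Taylor expansion, and then solving exactly for $\tilde V$ by a straightforward contraction argument in $L^\infty(B_2)$ (valid since $U_{x_n} > 0$ on $\{U > 0\}$), closes the bound in the spirit of the calculations in \cite{DS2}.
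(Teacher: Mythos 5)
The paper does not actually prove this lemma: it is imported verbatim as Proposition 3.5 of \cite{DS2}, so there is no in-paper argument to compare against. Judged on its own, your leading-order computation is exactly the right one (and is the computation behind the cited result): the signed distance to $\mathcal S$ is $y-\phi(x')+O(\mu^2)$ on $B_2$, the identity $U_{x_n}=U/(2\rho_0)$ is correct, and the balance $U_{x_n}\,(t-x_n)+\frac a4 \rho_0\,U=0$ gives $t-x_n=-\frac a2\rho_0^2$, hence $\tilde V=-\phi+\frac a2\rho_0^2+O(\mu^2)=\gamma_V+O(\mu^2)$, which is the stated $\gamma_V$.

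There are, however, two concrete soft spots in the way you close the estimate. First, the bound ``$t-x_n=O(\mu\rho_0^2)$'' is a posteriori: a priori the shift is $-\tilde V-\phi(x')+O(\mu^2)$, which you only know to be $O(\mu)$, so the scale-invariant Taylor bound $O(\mu^2\rho_0^{5/2})$ cannot be invoked before you know the conclusion. The clean fix is not a contraction in $L^\infty(B_2)$ --- the modulus of that map is not uniformly contractive, since $\partial_{x_n}V\sim \rho_0^{-1/2}\cos(\theta/2)$ degenerates near $P$ and blows up near $L$ --- but the monotonicity of $\tau\mapsto V(X-\tau e_n)$: it suffices to verify the two inequalities $V\bigl(X-(\gamma_V(X)\pm C\mu^2)e_n\bigr)\gtrless U(X)$, for which the shift is by construction $-\frac a2\rho_0^2\mp C\mu^2+O(\mu^2)$. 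Second, even then your error control does not cover the regime $\rho_0\lesssim \mu^2$ near $L$ (nor $P$ itself, where $\tilde V$ is multivalued): there the admissible shift $C\mu^2$ is no longer small compared with $\rho_0$, so no Taylor expansion applies, and one must argue directly, e.g.\ from $U(t,s)=\sqrt{(\rho+t)/2}$ and the comparability of $v_a$ with $U$, that moving the free boundary of $V$ by $\pm C\mu^2$ in the $e_n$ direction overshoots the value $U(X)\le\rho_0^{1/2}$. Relatedly, converting an error in the equation into an error in the shift divides by $\partial_{x_n}V\sim\rho_0^{-1/2}\cos(\theta/2)$, which degenerates near $P$; this is harmless because the remainder terms (e.g.\ ${\rm Re}\,\frac{-h^2}{2\sqrt z(\sqrt{z+h}+\sqrt z)^2}$ with $h$ real, $z=x_n+ix_{n+1}$) carry the same angular factor, but that cancellation must be checked rather than absorbed into ``$U\sim\rho_0^{1/2}$''. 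With these two regimes handled as above, your argument closes and gives the lemma.
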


\subsection{Almost minimizers as viscosity solutions}\label{amvs}

The next lemma states that an almost minimizer cannot be above a a subsolution $V_{M,\xi',a}$ in $B_1$ and be {\it tangent} to it in $B_{1/4}$.  
\begin{lem}\label{l613}
Assume $u$ satisfies $(H1)-(H4)$, and $u \ge V=V_{M,\xi',a} \in \mathcal V_{\mu ^{2/3}}$ in $B_1$. If $$a - tr M \geq  \mu,$$
and $\sigma \le \mu^{C}$, $C=C(\gamma,n)$ large,
then  we have $$u(X) \ge V_{M,\xi',a}(X + t_0e_n) \quad \mbox{ in $B_{1/4}$ for some $t_0>0$}.$$
\end{lem}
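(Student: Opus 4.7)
I argue by contradiction. Suppose the conclusion fails and set
\[
\bar t := \sup\bigl\{t \ge 0 : V(X+te_n) \le u(X)\ \text{for all}\ X\in\overline{B_{1/4}}\bigr\}.
\]
The assumption $u\ge V$ in $B_1$ gives $\bar t\ge 0$; the failure of the conclusion forces $\bar t=0$, and since $V\in C^{0,1/2}$, compactness produces a contact point $X_0\in\overline{B_{1/4}}$ with $u(X_0)=V(X_0)$. Because $\mu\gg C_0\mu^{4/3}$ for small $\mu$, the hypothesis $a-tr\,M\ge \mu$ is far stronger than the threshold of Lemma~\ref{l63}, so $V$ is a \emph{strict} subsolution: $\triangle V\ge\mu^2|x_{n+1}|$ in $B_2^+(V)$ and $\p V/\p\sqrt t\equiv 1$ on $F(V)$. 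The proof splits into two cases: $V(X_0)>0$ (interior contact) or $X_0\in F(V)$ (free-boundary contact).

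For the interior case, pick $r$ comparable to the distance from $X_0$ to $F(V)\cup(\{u=0\}\cap\{x_{n+1}=0\})$ so that $B_r(X_0)\subset B_1\cap\{V>0\}$, and let $h$ be the harmonic replacement of $u$ in $B_r(X_0)$ (using Remark~\ref{one_side} if $X_0$ lies on a positive slit). Since $h=u\ge V$ on $\p B_r(X_0)$ and $\triangle(V-h)\ge\mu^2|x_{n+1}|$ inside, comparing with the Poisson solution yields
\[
h(X_0)-V(X_0)\ \ge\ c\,\mu^2\,r^{3}.
\]
After rescaling so that $B_r(X_0)$ becomes the unit ball, $u$ satisfies an energy inequality with constant $\sigma_r\lesssim\sigma\, r^{-n}$, and Lemma~\ref{replace_thin} (together with undoing the rescaling) gives $\|u-h\|_{L^\infty(B_{r/2}(X_0))}\le C\, r^{1/2}\sigma_r^{1/(2n+4)}$. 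Choosing $r$ as a suitable small power of $\mu$ and using $\sigma\le \mu^C$ with $C=C(\beta,n)$ sufficiently large, the subharmonicity gain beats the almost-minimality error, so $u(X_0)>V(X_0)$, a contradiction.

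The free-boundary case $X_0\in F(V)$ (which forces $X_0\in F(u)$, since $u\ge V>0$ just above $X_0$ on the slit) is more delicate because the subharmonicity of $V$ degenerates on the slit and $V$'s $\sqrt t$-coefficient already matches $u$'s at $F(u)$. To exploit the strict margin $a-tr\,M\ge\mu$, introduce the perturbed subsolution $V':=V_{M,\xi',a+\delta}\in\mathcal V_{\mu^{2/3}}$ with $\delta\sim\mu$ small enough that $(a+\delta)-tr\,M\ge\mu/2$; by Lemma~\ref{l63}, $V'$ is again a strict subsolution, satisfies $F(V')=F(V)$ and $\p V'/\p\sqrt t\equiv 1$ there, and has $V'-V=(\delta/4)\rho^{3/2}\cos(\theta/2)>0$ in $\{V>0\}$. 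Combining Proposition~\ref{blowup} (so that $u$'s blow-up at $X_0$ agrees with $V$'s) with the $C^{1,\gamma}$ boundary estimate (H3) and the nondegeneracy (H4), one shows that $u$'s higher-order expansion at $X_0$ cannot accommodate both $u\ge V$ and the strict next-order positivity $V'-V\sim c\mu\rho^{3/2}$ in the positive phase, producing a Hopf-type strict inequality near $X_0$ that contradicts the touching. This parallels the corresponding argument for minimizers in \cite{DS2}.

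The principal technical obstacle is the scale balance in the interior case: the gain $c\mu^2 r^3$ must dominate the error $Cr^{1/2}(\sigma/r^n)^{1/(2n+4)}$ for some accessible $r$, which precisely fixes the exponent $C(\beta,n)$ in $\sigma\le\mu^C$. The free-boundary case is conceptually the subtler one: the large margin $a-tr\,M\ge\mu$ (rather than just $C_0\mu^{4/3}$) is what opens room for the perturbation $V'$ to create a definite higher-order gap, since at the free boundary itself the leading expansions of $u$ and $V$ coincide.
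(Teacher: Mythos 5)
The central gap is that your two-case analysis does not cover the real crux, and the case it does isolate is not proved. Your compactness argument indeed produces a point $X_0\in\overline{B_{1/4}}$ with $u(X_0)=V(X_0)$, but $X_0$ need not satisfy either ``$V(X_0)>0$ with a usable interior ball'' or ``$X_0\in F(V)$'': it may lie on the slit in the relative interior of $\{V=0\}$ (both functions vanish there, and $X_0\notin F(V)$), or it may have $V(X_0)>0$ but sit at distance from $F(V)\cup(\{u=0\}\cap\{x_{n+1}=0\})$ smaller than any fixed power of $\mu$. In that regime your interior estimate fails on scaling grounds: the gain $c\mu^2 r^3$ decays in $r$ faster than the replacement error, which by Remark \ref{rem01_thin} and Lemma \ref{replace_thin} is of size $Cr^{1/2}(\sigma r^{-n})^{1/(2n+4)}=C\sigma^{1/(2n+4)}r^{(n+4)/(2n+4)}$, so no choice of the exponent in $\sigma\le\mu^{C}$ helps once $r$ is not bounded below by a power of $\mu$. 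As for the free-boundary case, the step ``blow-up (Proposition \ref{blowup}) plus (H3), (H4) yields a Hopf-type strict inequality'' is not an argument: the blow-up only gives $u=U(\cdot)+o(\rho^{1/2})$ with no rate, which cannot be weighed against the next-order gap $V'-V\sim c\mu\rho^{3/2}$; and, more fundamentally, excluding a touching by a strict comparison subsolution at a free boundary point is precisely the (noninfinitesimal) viscosity property this lemma is establishing for almost minimizers. It cannot be obtained by zooming in, because almost minimality degrades under rescaling ($\sigma_\rho=\rho^{-n}\sigma$), so there is no infinitesimal comparison principle available to close the Hopf argument.

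For contrast, the paper avoids pointwise contact arguments entirely and works variationally at a fixed scale: it introduces $\bar V=V_{\bar M,\xi',\bar a}(\cdot+\frac{\mu}{8n}e_n)$ with $\bar a=a-\frac{\mu}{2n}$, $\bar M=M+\frac{\mu}{2n}I$, so that by Lemma \ref{l64} $\bar V\le V$ near $\partial B_1$ while $\bar V\ge V(\cdot+\frac{\mu}{16n}e_n)$ in $B_{1/2}$; comparing $u$ with $\max\{u,\bar V\}$ gives $E(u_{\min},B_1)-E(\bar V,B_1)\le\sigma$; the sliding family of subsolutions $V_t$ from Lemma \ref{l63} together with the comparison theory of \cite{DS2} shows that $\bar V$ minimizes $E(v,B_1)+\mu^2\int_{B_1}v\,|x_{n+1}|\,dX$ in the constrained class, hence $E(u_{\min},B_1)-E(\bar V,B_1)\ge\mu^2\int_{B_1}(\bar V-u_{\min})|x_{n+1}|\,dX$; finally, tangency at any $X_0\in\overline{B_{1/4}}$ forces this integral to be at least $\mu^{C(\gamma)}$, where the delicate near-zero-set case is handled quantitatively at the fixed scale $d=\mu^{C(\gamma)}$ using (H4) (to force $\mathrm{dist}(X_0,F(u))\ge\mu^{3/2}$) and (H3) (the $C^{1,\gamma}$ bound on both sides of the slit), yielding $\sigma\ge\mu^{2+C(\gamma)}$, a contradiction. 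If you wish to keep your contact-point strategy, the blow-up/Hopf step must be replaced by a quantitative, fixed-scale estimate of this kind; as written, the proof has a genuine gap exactly where the lemma is hardest.
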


A similar statement holds for supersolutions $V_{M,\xi',a}$ with $a - tr M \leq - \mu$ when the inequalities are reversed.

When we say that $V$ is tangent to $u$ by below in $B_{1/4}$  we mean that $u \ge V$ and any translation of $V$ by $t e_n$ with $t>0$, is strictly above $u$ at some point in $B_{1/4}$. 

Lemma \ref{l613} is used to provide a comparison principle for a function $u$ that satisfies $(H1)$-$(H4).$ 
Precisely, if $u$ is above a translation of $V(X-te_n)$ in $B_1$ and if in addition $u \geq V$ in say $B_1 \setminus B_{1/4}$ then this inequality can be extended to $u \geq V$ in $B_1.$

\begin{proof}
Let $$\bar V(X):=V_{\bar M, \xi', \bar a} \left(X + \frac {\mu}{8n} e_n \right ), \quad \bar a=a - \frac {\mu}{2 n}, \quad  M= M + \frac {\mu}{2n } I.$$

By Lemma \ref{l64} we can compare the hodograph transforms of $\bar V$ and $V$ and deduce
\begin{equation}\label{6020}
\bar V(X) \le V(X) \quad \mbox{ near $\p B_1$,} \quad \bar V(X) \ge V(X + \frac {\mu}{16 n}) \quad \mbox{in $B_{1/2}$.}
\end{equation}

Set $$u_{max}:=\max\{u, \bar V\}, u_{min}:=\min\{u, \bar V\}$$ and notice that 
$$u_{max}= u, \quad u_{min}=\bar V \quad \text{near $\p B_1$}.$$
Then we have
$$E(u, B_1)  \le E(u_{max}, B_1) +  \sigma,$$
or equivalently
\begin{equation}\label{602} E(u_{min}, B_1)-E(\bar V, B_1)  \le  \sigma.\end{equation} 
On the other hand we claim that 
\begin{equation}\label{603}
E(u_{min}, B_1)-E(\bar V, B_1) \geq \mu^{2} \int_{B_1}(\bar V-u_{min}) |x_{n+1}| \ dX.
\end{equation}
This inequality is a consequence of the fact that $\bar V$ is the minimizer of the energy
$$E(v,B_1) +  \mu^{2} \int_{B_1} v \, \,  |x_{n+1}| \ dX,$$
among all $H^1(B_1)$ functions $v$ that satisfy $\min\{V, \bar V\} \le v \le \bar V$.

Indeed, as it was shown in \cite{DS2}, minimizers do enjoy the comparison principle with comparison subsolutions in the unconstrained region $v < \bar V$. 
By Lemma \ref{l63} we know that in $B_1$ the translations of $\bar V$, $V_t:=V_{\bar M, \xi',\bar a} (X+ t e_n)$ with $t \in [0, \mu/8n]$ form a continuous family of such subsolutions for the minimization problem above since 
$$\bar a - tr \, \bar M \ge \mu/2 \quad \Longrightarrow \quad \triangle V_t \ge  \mu^2|x_{n+1}|.$$
Now the claim \eqref{603} follows since $V_0 \le \min\{V, \bar V\}$ and $V_{\mu/8n}=\bar V$. 

Let's assume by contradiction that $V$ is {\it tangent} by below to $u$ at some point $X_0 \in \bar B_{1/4}$, in the sense that any translation $V(x+te_n)$ with $t>0$ cannot be below $u$ in a neighborhood of $X_0$. In view of \eqref{602}-\eqref{603} it remains to show that in this case we satisfy a rough integral bound
\begin{equation}\label{604}
\int_{B_1} (\bar V - u_{min}) \, |x_{n+1}| dX \ge  \mu^{ C(\gamma)}.
\end{equation}

If $X_0$ is outside a $\mu^2$ neighborhood of the set $\bar V =0$ then, by using \eqref{6020}, we find
$$(\bar V - u) (X_0) \ge c \,  \mu \max V_{x_n}(X_0 + \xi e_n) \ge c \mu |X_0 \cdot e_{n+1}| \ge \mu^4.$$
The uniform $C^{0,1/2}$ bound of $u$ implies
$$\bar V - u_{min} \ge \bar V - u  \ge \frac 12 \mu^4 \quad \mbox{in} \quad B_{c\mu^8}(X_0),$$
 which gives the desired integral bound.

We consider the case when $X_0$ is in a $\mu^2$ neighborhood of the set $\bar V =0$. Let $r$ denote the distance from $X_0$ to $F(u)$ and then $$r \ge \mu^{3/2}.$$ Otherwise, by $(H4)$, we have $$u \ge c r^{-1/2} |x_{n+1}| \ge c \mu ^{-3/4} |x_{n+1}| \gg C \mu ^{-1} |x_{n+1}| \ge V(X+ \frac {\mu}{32} e_n),$$ in $B_{2r}(X_0)$ and we contradict that $V$ is tangent to $u$ by below at $X_0$. This argument and H4) show that $u=0$ in $B_{r}(X_0) \cap \{x_{n+1}=0\},$ hence by $(H3)$
$$\|u\|_{C^{1,\gamma}} \le C r^{-1/2 -\gamma} \le \frac 12 \mu^{-3} \quad \mbox{in} \quad B^+_{r/4}(X_0).$$ 
Since $V$ is tangent by below at $X_0$ and satisfies the same $C^{1,\gamma}$ bound, we get
$$|u-V| \le \mu^{-3} d^{1+ \gamma} \quad \mbox{in $B_d(X_0)$, $d \le r/4$}.$$
On the other hand, by \eqref{6020},
$$\bar V - u  \ge (V(X+ \frac {\mu}{16\, n}e_n) - V(X)) - (u-V),$$
and we use that the difference between the two translates of $V$ is greater than $c \mu\,  \partial _{x_n} V \ge c \mu |x_{n+1}|$. We obtain 
$$ \bar V -u \ge c \mu d - \mu^{-3} d^{1+ \gamma} \ge \frac c 2 \mu \, d \quad \mbox{in a ball $B_{cd} \subset B_d(X_0)$},$$
provided we choose $d= \mu ^{C(\gamma)}$ with $C(\gamma)$ sufficiently large, which implies the desired bound \eqref{604}.

\end{proof}

For completeness, we state also the case of supersolutions.

\begin{lem}\label{l61}
Assume $u$ satisfies $(H1)$-$(H4)$ and $u \le V=V_{M,\xi',a} \in \mathcal V_{\mu ^{2/3}}$ in $B_1$. If $$a - tr M \leq - \mu,$$
and $\sigma \le \mu^{C}$, $C=C(\gamma,n)$ large,
then  we have $$u(X) \le V_{M,\xi',a}(X - t_0e_n) \quad \mbox{ in $B_{1/4}$ for some $t_0>0$}.$$
\end{lem}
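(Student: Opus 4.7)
The statement is the mirror image of Lemma~\ref{l613}, and my plan is to repeat that proof with all inequalities and signs of the relevant perturbations reversed. First I would construct the perturbed supersolution
$$\bar V(X) := V_{\bar M, \xi', \bar a}\!\left(X - \tfrac{\mu}{8n}\, e_n\right), \quad \bar a := a + \tfrac{\mu}{2n}, \quad \bar M := M - \tfrac{\mu}{2n}\, I,$$
so that $\bar a - \mathrm{tr}\,\bar M \le -\mu/2$. By the supersolution version of Lemma~\ref{l63} (obtained from Proposition~3.2 of \cite{DS2} by the symmetric sign computation, as noted in the paragraph following Lemma~\ref{l613}), $\bar V$ is a comparison supersolution with $\Delta \bar V \le -\mu^2|x_{n+1}|$ in $B_2^+(\bar V)$. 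The translation is chosen in the $-e_n$ direction so that the hodograph estimate of Lemma~\ref{l64} yields $\tilde{\bar V}(X)-\tilde V(X) \approx \tfrac{\mu}{8n}(2|X|^2-1) + O(\mu^2)$, the mirror of \eqref{6020}: $\bar V \ge V$ near $\partial B_1$ and $\bar V(X) \le V(X - \tfrac{\mu}{16n}e_n)$ throughout $B_{1/2}$.

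Next I would assume by contradiction that $V$ is tangent to $u$ from above at some $X_0 \in \bar B_{1/4}$ and set $u_{\max}:=\max\{u,\bar V\}$, $u_{\min}:=\min\{u,\bar V\}$. Since $u \le V \le \bar V$ near $\partial B_1$, $u_{\min}$ equals $u$ there and is an admissible competitor in $(H2)$: $E(u,B_1) \le E(u_{\min},B_1)+\sigma$. Combined with the lattice identity $E(u)+E(\bar V)=E(u_{\max})+E(u_{\min})$, this gives
$$E(u_{\max},B_1) - E(\bar V, B_1) \le \sigma.$$
The supersolution property of $\bar V$ produces the complementary lower bound
$$E(u_{\max},B_1) - E(\bar V, B_1) \ge \mu^2 \int_{B_1}(u_{\max} - \bar V)\,|x_{n+1}|\, dX,$$
by the same argument as for \eqref{603}, now applied to the continuous family of supersolutions $V_t := V_{\bar M,\xi',\bar a}(X - \tfrac{\mu}{8n}e_n + te_n)$, $t\in[0,\tfrac{\mu}{8n}]$, which interpolates from $V_0=\bar V$ upward past $\max\{V,\bar V\}$. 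Here one uses the comparison principle from \cite{DS2} to identify $\bar V$ as the minimizer of the perturbed functional $E(w,B_1) - \mu^2\int w\,|x_{n+1}|\,dX$ in the admissible class $\bar V \le w \le \max\{V,\bar V\}$ (the sign flip in the perturbation is exactly what aligns with the supersolution regime).

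The final step is to show that the tangency forces the integral above to be at least $\mu^{C(\gamma)}$, contradicting $\sigma \le \mu^C$ once $C$ is large. This is a verbatim repeat of the two-case analysis at the end of Lemma~\ref{l613}. If $X_0$ lies outside a $\mu^2$-neighborhood of $\{\bar V=0\}$, the pointwise gap $V(X_0) - \bar V(X_0) \gtrsim \mu\,|x^{(0)}_{n+1}|$ combined with $u(X_0)=V(X_0)$ and the uniform $C^{0,1/2}$ bound on $u$ isolates a ball of radius $\sim \mu^8$ on which $u-\bar V \ge c\mu^4$. If instead $X_0$ is within such a neighborhood, the non-degeneracy $(H4)$ forces $d_F(X_0) \ge \mu^{3/2}$ (otherwise $u$ would be too large for $u\le V$ to hold in $B_{2r}(X_0)$), after which the boundary $C^{1,\gamma}$ estimate $(H3)$ applied to both $u$ and $V$ gives $|u-V|\le \mu^{-3}d^{1+\gamma}$ in $B_d(X_0)$; combined with $V-\bar V \ge c\mu|x_{n+1}|$, this yields $u-\bar V \ge \tfrac c2 \mu d$ on a subball of radius $cd$ with $d=\mu^{C(\gamma)}$. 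The main obstacle is purely bookkeeping: setting up the direction of the shift of $\bar V$ and tracking signs through the hodograph transform so that \eqref{6020} reverses correctly; once this is in place the rest is a literal sign-flip of the proof of Lemma~\ref{l613}.
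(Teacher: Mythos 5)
Your setup — the perturbed supersolution $\bar V$, the lattice identity $E(u)+E(\bar V)=E(u_{\max})+E(u_{\min})$, the identification of $\bar V$ as the constrained minimizer of $E(w)-\mu^2\int w\,|x_{n+1}|\,dX$ via the sliding family of supersolutions, and the reduction to a lower bound $\mu^{C}$ for the integral — correctly mirrors Lemma \ref{l613} and agrees with the paper. The gap is in the final case analysis, which is exactly the part the paper singles out as ``slightly different'' from the subsolution case. You claim that when $X_0$ lies in a $\mu^2$-neighborhood of $\{V=0\}$, the nondegeneracy $(H4)$ \emph{excludes} $r:=d(X_0,F(u))\le \mu^{3/2}$, ``otherwise $u$ would be too large for $u\le V$ to hold in $B_{2r}(X_0)$.'' This exclusion is not valid. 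If $r\le\mu^{3/2}$, $(H4)$ gives $u\ge c\,\mu^{-3/4}|x_{n+1}|$ in $B_{2r}(X_0)$, but that is perfectly compatible with $u\le V$: being within $\mu^2$ of $\{V=0\}$ says nothing about the distance from $X_0$ to the edge $F(V)$, and if $F(V)$ passes within $O(r)$ of $X_0$ (the expected configuration at a touching point), then $V\gtrsim |x_{n+1}|\,r^{-1/2}\ge c\,\mu^{-3/4}|x_{n+1}|$ throughout $B_{2r}(X_0)$, so no contradiction arises. In Lemma \ref{l613} the analogous case is ruled out because largeness of $u$ contradicts tangency from \emph{below}; with tangency from above there is nothing to contradict. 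The correct treatment, which is the paper's, is to use this case to produce the integral bound directly: the downward translate $\underline V:=V(X-\frac{\mu}{16n}e_n)$ has its free boundary at distance of order $\mu$ from $X_0$, hence $\underline V\le C\mu^{-1/2}|x_{n+1}|\ll \mu^{-3/4}|x_{n+1}|\le u$ on $B_{\mu^{3/2}}(X_0)$, and integrating $(\max\{u,\underline V\}-\underline V)\,|x_{n+1}|$ over that small ball already gives a bound of the form $\mu^{C(n)}$.

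There is a second, smaller omission in your case $r\ge\mu^{3/2}$: to use the $C^{1,\gamma}$ bound for $V$ near $X_0$ (and hence conclude $|u-V|\le\mu^{-3}d^{1+\gamma}$) you need $V$ to vanish on $\mathcal B_{cr}(X_0)$. In Lemma \ref{l613} this is automatic from $0\le V\le u$ together with $u=0$ there; here $u\le V$ gives no such information, and one must argue as the paper does: by $(H3)$, $u\le Cr^{-1/2}|x_{n+1}|$ in $B_{r/2}(X_0)$, so if $V$ were positive on part of $\mathcal B_{cr}(X_0)$, a small downward translate of $V$ would still lie above $u$ near $X_0$, contradicting the tangency. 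Both points are fixable, but as written the proof does not close in precisely the step where the supersolution case genuinely differs from Lemma \ref{l613}.
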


\begin{proof}
The proof is similar, and we only sketch the last part of argument which is slightly different. The rough integral bound which we need to prove now is
$$\int_{B_{1/2}} (\max\{u, \underbar V\} - \underbar V ) |x_{n+1}| \, dX \ge \mu^C, \quad \quad \underbar V:=V_{M,\xi',a}(X-\frac{\mu}{16 n}e_n),$$
if we assume that $V$ is tangent by above to $u$ at some point $X_0 \in B_{1/4}$. 

If $X_0$ is outside a $\mu^2$ neighborhood of the set $V =0$ then
$$(u - \underbar V)(X_0) \ge \mu^4,$$
and the conclusion follows from the Holder continuity of $u$ and $V$. 

When $X_0$ is in a $\mu^2$ neighborhood of the set $V =0$, let $r$ denote the distance from $X_0$ to $F(u)$. 
If $$r \le \mu ^{3/2}$$ then, by H4) we have
$$u \ge c \mu^{-3/2} |x_{n+1}| \ge \frac c 2  \mu^{-3/2} |x_{n+1}| + \underbar V, \quad \mbox{in} \quad B_{\mu^{3/2}(X_0)},$$ 
and the integral bound follows.
If $$r \ge \mu^{3/2},$$
then we may assume that $u=0$ in $B_r(X_0) \cap \{x_{n+1}=0\}$ since otherwise the conclusion is obvious in view of $(H4)$. We can use $(H3)$ and obtain that 
$$u \le C r^{-1/2}|x_{n+1}| \quad \mbox{in} \quad B_{r/2}(X_0).$$
This means that also $V=0$ on $B_{cr}(X_0) \cap \{x_{n+1}=0\}$ for some small $c$, otherwise we contradict that $V$ and $u$ are tangent at $X_0$. Using that in $B_{cr}(X_0)$ both $u$ and $V$ have $C^{1,\gamma}$ norm bounded by $C r^{-1/2 - \gamma} \ll \mu^{-3}$ we find
$$|u-V| \le \mu^{-3} d^{1+ \gamma} \quad \mbox{in $B_d(X_0)$, $d \le cr$},$$
and the last argument of the previous lemma applies as we write
$$u - \underline V \ge V- \underline V - |u-V|.$$ 
\end{proof}

\section{Proof of the Improvement of flatness Lemma.}

\subsection{The normalized Hodograph transform}
We follow the arguments in \cite{DR} and check that they apply in our situation. 
Here and henceforth we denote by $P$ the half-hyperplane $$P:= \{X \in \R^{n+1} : x_n \leq 0, x_{n+1}=0\}$$ and by $$L:= \{X \in \R^{n+1}: x_n=0, x_{n+1}=0\}.$$ 
 Also we denote by $U_b$ the translation of $U$ by $-b e_n$, 
$$U_b(X):=U(X + b e_n), \quad \quad b \in \R.$$

Assume that $u$ satisfies the hypotheses of Lemma \ref{limp} and therefore satisfies the $\eps$-flatness assumption \begin{equation}\label{flattilde}U(X - \eps e_n) \leq u(X) \leq U(X+\eps e_n) \quad \textrm{in $B_1.$}\end{equation}
We define the multivalued map $\tilde u$ as the $\eps$-normalized Hodograph transform of $u$ with respect to $U$ which associate to each $X \in B_{1-\eps} \setminus P$ the set $\tilde u(X) \subset \R$ via the formula 
\begin{equation}\label{deftilde} U(X) = u(X - \eps \tilde u(X) e_n).\end{equation} 
Since $U$ is increasing in the $e_n$ direction, we obtain that 
$$|\tilde u| \le 1 \quad \quad \mbox{in $B_{1-\eps} \setminus P$}.$$
The free boundary problem for $u$ is encoded in the limiting values of $ \eps \tilde u$ on $L$. 

\subsection{The linearized problem}
As in \cite{DR}, the strategy to prove Lemma \ref{limp} is to show that for small $\eps$, $\tilde u$ is well approximated uniformly on compact sets of $B_1$ by a viscosity solution to the associated linearized equation
\begin{equation}\label{linear}\begin{cases} \Delta (U_n h) = 0, \quad \text{in $B_1 \setminus P,$}\\ |\nabla_r h|=0, \quad \text{on $ L$.}\end{cases}\end{equation}

We recall the definition of a viscosity solution $h$ to the linearized problem above.

\begin{defn}\label{linearsol}We say that $h$ is a solution to \eqref{linear}  if $h \in C(B_1)$, $h$ is even in $x_{n+1}$ and it satisfies
\begin{enumerate}\item $\Delta (U_n h) = 0$ \quad in $B_1 \setminus P$;\\ \item $h$ cannot be touched by below (resp. by above) at any  $X_0=(x'_0,0,0) \in L$, by a continuous function $\phi$ which satisfies $$\phi(X) = \phi(X_0) + a(X_0)\cdot (x' - x'_0) + b(X_0) r + O(|x'-x'_0|^2 + r^{3/2}), $$ with $b(X_0) >0$ (resp. $b(X_0) <0$) and $r$ represents the distance from $X$ to $L$, $r^2=x_n^2+x_{n+1}^2$. \end{enumerate}\end{defn}

Lemma \ref{limp} follows easily from the $C^{1,\alpha}$ estimate for $h$ obtained in \cite{DR}, once the uniform convergence of $\tilde u$ to a solution $h$ is established. 
\begin{thm}\cite{DR}\label{classnewcor} Let  $h$ be a solution to \eqref{linear} such that $|h|  \leq 1$. Given any $\alpha \in (0,1)$, there exists $\eta$ depending on $\alpha$, such that $h$ satisfies
\begin{equation*}|h(X) - (h(0) +  \xi' \cdot x') | \leq \frac 1 4 \eta^{1+\alpha} \quad \textrm{in $B_{\eta},$}\end{equation*} 
for some vector $\xi' \in \R^{n-1}$.\end{thm}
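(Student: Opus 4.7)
The plan is to prove Theorem \ref{classnewcor} by a compactness/contradiction argument that reduces the $C^{1,\alpha}$ estimate at the origin to a Liouville-type classification for global solutions of the linearized problem \eqref{linear}. Since the boundary condition $|\nabla_r h|=0$ on $L$ kills the transverse dependence to leading order, the affine approximation only involves the tangential variable $x' \in \R^{n-1}$, which matches the form of the conclusion.

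First I would collect the a priori regularity for solutions of \eqref{linear}. In the interior of $B_1 \setminus P$, the function $w := U_n \, h$ is harmonic, and since $U_n$ is smooth and positive away from $L \cup P$, one obtains interior $C^{0,\gamma}$ estimates for $h$ on compact subsets of $B_{3/4} \setminus L$. The boundary behavior near $L$ is controlled by the viscosity definition: the admissible test functions at $X_0 \in L$ have the form
\begin{equation*}
\phi(X) = \phi(X_0) + a(X_0) \cdot (x'-x_0') + b(X_0)\, r + O(|x'-x_0'|^2 + r^{3/2}),
\end{equation*}
and only touchings with $b(X_0)=0$ are allowed. A standard Harnack/barrier argument (with radial barriers built from the homogeneity-$1/2$ and homogeneity-$3/2$ modes of the two-dimensional profile) then produces an oscillation decay of $h$ along $L$, giving $h \in C^{0,\gamma'}(\overline{B_{3/4}})$ with a bound in terms of $\|h\|_{L^\infty(B_1)}$.

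Next I would run the contradiction argument. Assume the conclusion fails for some $\alpha \in (0,1)$: there exist solutions $h_k$ with $|h_k| \le 1$ and scales $\eta_k \to 0$ such that every affine-in-$x'$ approximation of $h_k$ at the origin fails at scale $\eta_k$ with error larger than $\tfrac14 \eta_k^{1+\alpha}$. Choosing the optimal affine correction $h_k(0) + \xi_k' \cdot x'$ and rescaling
\begin{equation*}
\tilde h_k(X) := \frac{h_k(\eta_k X) - h_k(0) - \xi_k' \cdot (\eta_k x')}{C_k\, \eta_k^{1+\alpha}}, \qquad C_k \ge \tfrac14,
\end{equation*}
one produces a sequence which, by optimality of $\xi_k'$ and the Hölder estimates above, is uniformly bounded on every compact set (a standard Caffarelli-type dyadic dichotomy). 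Arzelà--Ascoli then yields a global limit $\tilde h_\infty$ which is still a viscosity solution of \eqref{linear} on $\R^{n+1} \setminus P$, is even in $x_{n+1}$, has growth at most $|X|^{1+\alpha}$, and admits no nontrivial affine-in-$x'$ component at the origin.

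The main obstacle, and the heart of the argument, is a Liouville-type classification: any viscosity solution to \eqref{linear} on $\R^{n+1} \setminus P$, even in $x_{n+1}$, with growth strictly less than $|X|^{2}$, must be of the form $c + \xi' \cdot x'$. I expect to prove this by differentiating in the tangential directions $x_i$ ($i<n$), which preserves \eqref{linear} and yields a sub-quadratic global solution that one shows is constant; for the remaining $(x_n, x_{n+1})$ dependence, the boundary condition $|\nabla_r h|=0$ together with the available homogeneities of the 2D weighted-harmonic operator (namely, the next admissible radial mode beyond $U$ itself is of homogeneity $3/2 > 1+\alpha$ for $\alpha$ small) rules out any nontrivial dependence on the transverse variables. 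Applying this rigidity to $\tilde h_\infty$ contradicts the normalization of $\tilde h_k$ and gives the theorem with $\eta = \eta(\alpha,n)$.
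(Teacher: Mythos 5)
The paper does not actually prove Theorem \ref{classnewcor}: it is quoted verbatim from \cite{DR}, where the regularity of the linearized problem is established directly, via a Harnack inequality for $h$ obtained by comparison with explicit sub/supersolutions and an iteration at dyadic scales, rather than by the blow-up plus Liouville scheme you propose. Your scheme is not unreasonable in outline (the equation is indeed preserved when you subtract constants and tangential affine functions, since $U_n$ and $U_nx_i$, $i<n$, are harmonic off $P$), but as written it has a genuine gap exactly at its heart: the Liouville-type classification of global sub-quadratic solutions is asserted, not proved, and the sketch you give for it does not work. You cannot ``differentiate in the tangential directions'': a solution in the sense of Definition \ref{linearsol} is merely a continuous function satisfying a touching condition on $L$, and neither derivatives nor differences of such viscosity solutions are automatically solutions of the boundary condition (ii); justifying this requires a comparison principle or prior regularity up to $L$, which is precisely what the theorem is supposed to deliver, so the argument is circular as it stands.

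Moreover, the homogeneity count you invoke for the transverse variables is the wrong one. The mode of homogeneity $3/2$ belongs to the free boundary problem itself (where $U\sim\rho^{1/2}$ and the next slit-harmonic mode is $\rho^{3/2}\cos\frac{3\theta}{2}$), not to the linearized problem: for $h$ one should look at $w=U_nh$, which is even, harmonic off $P$ and vanishes on $P\setminus L$, so the admissible two-dimensional profiles of $h=w/U_n$ have homogeneities $0$, then $1$ (the mode $r$, which is exactly the one excluded by $|\nabla_r h|=0$ on $L$), then $2,3,\dots$; there is no $3/2$ mode for $h$. This matters quantitatively: if the expansion error were only $O(r^{3/2})$, as your sketch suggests, you would obtain the conclusion only for $\alpha<1/2$, whereas the statement (and its use in Lemma \ref{limp}) requires every $\alpha\in(0,1)$, which needs the error to beat $|X|^{1+\alpha}$ for all such $\alpha$, i.e.\ the absence of any homogeneity in $(1,2)$ other than the tangential linear ones. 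Finally, the two ingredients you label as standard --- uniform H\"older continuity of $h$ up to $L$ (needed for compactness) and the uniform growth bound on the rescaled sequence $\tilde h_k$ --- are in fact where most of the work lies in \cite{DR}; without the barrier construction and Harnack inequality for this degenerate problem, the compactness step has no foundation. As it stands, the proposal reduces the theorem to an unproved rigidity statement whose suggested proof is both circular and based on an incorrect spectral picture.
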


\subsection{Two properties}
Next we state two properties (P1) and (P2) for the function $u$ which turn out to be sufficient for obtaining the approximation of $\tilde u$ with solutions $h$ of \eqref{linear}, and for obtaining the improvement of flatness Lemma \ref{limp}. These properties are written in terms of a small parameter $\delta>0$. 

\

(P1) {\it Harnack inequality} 

Given $\delta>0$, there exists $\eps_0=\eps_0(\delta)$ such that 
if $\eps \le \eps_0$ and $$u \ge U_b \quad \mbox{ in $B_r(X_0) \subset B_1$, and $|b| \le \eps$, $r \ge \delta$,}$$and 
$$u(Y) \ge U_{b + \tau \eps}(Y) \quad \mbox{ for some $Y$ with  $B_{r/4}(Y) \subset B_r(X_0) \setminus P$, and $ \tau \in [\delta,1]$,}$$ 
then $$u \ge U_{b + c \tau \eps} \quad \mbox{ in $B_{r/2}(X_0)$},$$ for some $c=c(n)>0$ universal.

Similarly, the above holds when we replace $\ge$ by $\le$ and $\tau$ by $-\tau$.

\ 

(P2) {\it Viscosity property}

Given $\delta>0$, there exists $\eps_0=\eps_0(\delta)$ such that if $\eps \leq \eps_0$ then

a)  we cannot have $u(X_0)=q(X_0)$ and $u \ge q$ in $B_{\delta}(X_0) \subset B_1 \setminus P$ where 
$$q \in C^2(B_\delta(X_0)) \quad \mbox{ such that } \quad \|D^2 q\| \le \delta^{-1},  \quad \triangle q \ge \delta \eps.$$

b) we cannot have $u \ge V$ in $B_\delta(X_0) \subset B_1$ with $X_0 \in L$ and $V$ tangent by below to $u$ in $B_{\delta/4}(X_0)$ where
$V$ is a translation of a function $V_{M,\xi',a} \in \mathcal V_{\delta^{-1}\eps}$, 
$$V(X):=V_{M,\xi',a}(X+t e_n), \quad \mbox{with} \quad a- tr \, M \ge \eps.$$

Similarly, a), b) hold when we compare $u$ with functions $q$, $V$ by above and $\triangle q \le - \delta \eps$ respectively $a- tr \, M \le - \eps$.

\medskip

When we say that $V$ tangent by below to $u$ in $B_{\delta/4}(X_0)$ we mean that any translation $V(x+se_n)$ with $s>0$ cannot be below $u$ in
$B_{\delta/4}(X_0)$.

\

We explain why (P1) and (P2) suffice for the proof of Lemma \ref{limp}. We argue by compactness and consider a sequence of functions $u_k$ satisfying the assumptions of the Lemma \ref{limp} with $\eps_k, \sigma_k \to 0$. We show that we can extract a subsequence of the $\tilde u_k$'s which converges on compact sets of $B_1$ to a solution $h$ of the linearized problem. Then Lemma \ref{limp} becomes a consequence of Theorem \ref{classnewcor}. 

We may suppose that $\eps_k \leq \eps_0(2^{-k})$ with $\eps_0(\delta)$ as in the properties (P1), (P2) above. Then property (P1) guarantees that $\tilde u_k$ satisfies the Harnack inequality in balls of size $r$ included in $B_1$ from scale $r=1$ up to scale $r = 2^{-k} $. By Arzela Ascoli theorem, the multivalued graph of $\tilde u_k$ converges (up to a subsequence) in the Hausdorff distance to the graph of a uniformly H\"older function $h$ defined in $B_1$.

Property (P2) implies that the limit function $h$ satisfies the linearized problem \eqref{linear}. Indeed, suppose that $Q$ is a quadratic polynomial that touches $U_n h$ strictly by below at a point $X_0 \in B_1 \setminus P$ with $\triangle Q >0$. By the uniform convergence of $\tilde u_k$ to $h$ it follows that a translation of $q_k$ defined as $$U(X)=:q_k(X-\eps_k \frac{Q}{U_n}  e_n),$$ touches $u_k$ by below at a point $X_k \to X_0$ and it is below $u_k$ in a fixed neighborhood of $X_0$.  This contradicts property (P1) part a) for large $k$ since it is straightforward to verify that (see Proposition 2.8 in \cite{DS2}) 
$$q_k \in C^2(B_\delta(X_k)), \quad  \|D^2 q_k\| \le \delta^{-1},  \quad \triangle q_k = \eps_k \triangle Q + O(\eps_k^2) \ge \delta \, \, \eps_k,$$
for a fixed $\delta>0$ depending on $Q$ and $X_0$.

Next we check that $h$ satisfies the correct boundary condition on $L$. 
We argue by contradiction.  Assume for simplicity (after a translation) that there exists a function $\phi$ which touches $h$ by below at $0$ with  $\phi(0)=0$ and such that $$\phi(X) = \xi' \cdot x'  + \beta  r + O(|x'|^2 + r^{3/2}), \quad \mbox{with} \quad \beta >0.$$   

Then we can find a constant $a \ge 1$ large (depending on $\phi$) such that the quadratic polynomial
$$Q(X)= \xi' \cdot x'  - \frac{a}{2}|x'|^2 +  n a  \, r^2 $$
 touches $h$ strictly by below at $0$ in $B_{2 \delta}$ for some sufficiently small $\delta$. We let
 $$V:=V_{\eps a I, -\eps \xi', 2n \eps a} \in \mathcal V_{ \delta^{-1} \eps},$$
 which satisfies the conditions of property (P2) part b).
 By Lemma \ref{l64} we know that the $\eps$-rescaling of the hodograph transform of $V$ satisfies
 $$\tilde V= Q + O(\eps).$$
 
 Since $\tilde u_k$ converges uniformly to $h$, we obtain that a translation of $V$ (with $\eps=\eps_k$) touches $u_k$ by below at some point $X_k \to 0$ and $u_k$ is above this translation in $B_\delta$. This contradicts property (P2) part b).

\subsection{Properties (P1) and (P2) are satisfied.}

We fix $ \delta>0$ and consider a function $u$ that satisfies the hypotheses of Lemma \ref{limp}, and recall that $\sigma \le \eps^C$ with $C=C(\beta,n)$ sufficiently large. We need to show that (P1) and (P2) hold if $\eps \le \eps_0(\delta)$ sufficiently small.

We distinguish 3 cases depending on the position of $B_r(X_0)$ with respect to $P$ and $L$.

\medskip

{\it Case 1:} $B_r(X_0) \subset B_1 \setminus P$. 

Then $\{ u>0 \}$ in $B_{7r/8}(X_0)$ and in this ball we replace $u$ by its harmonic replacement $v$.
The almost minimizer hypothesis H2) implies
\begin{equation}\label{680}
\int_{B_{7r/8}(X_0)}|\nabla(u-v)|^2 dX \le  \sigma.
\end{equation}
From the Poincare inequality and the uniform Holder continuity of $u$ and $v$ in $B_{3r/4}(X_0)$ we conclude
$$|u-v| \le  (C r^2 \sigma)^\frac{1}{2(n+2)}\le \eps^2 \quad \mbox{in} \quad B_{3r/4}(X_0).$$
Hence in $B_{3r/4}(X_0)$, $u$ is approximated by a harmonic function up to an $\eps^2$ error, and then properties (P1) and (P2) a) easily follow for this case provided that $\eps$ is sufficiently small.

\medskip

{\it Case 2:} $X_0 \in P$ and $B_r(X_0) \subset B_1 \setminus L$. 

We let $v$ denote the harmonic replacement of $u$ in $B_r^+(X_0)$. By $(H3)$ both $u$ and $v$ have $C^{1,\gamma}$ norm in $B^+_{7r/8}(X_0)$ bounded by $C r^{-(1/2 + \gamma)}$. Now the corresponding inequality \eqref{680} gives constants $K$, $k$ depending on $n$ and $\gamma$ such that
$$|\nabla (u-v)| \le C r^{-K} \sigma ^k \le \eps^2 \quad \mbox{in} \quad B^+_{3r/4}(X_0).$$
Then $u$ is approximated by a harmonic function up to an $\eps^2|x_{n+1}|$ error, and then property (P1) follows from the boundary Harnack inequality for harmonic functions.

\medskip

{\it Case 3:} $X_0 \in L$.

After a rescaling we may assume that $X_0=0$, $r=1$ and $\sigma$ is replaced by $C(\delta) \sigma$ (see Remark \ref{rem01_thin}). 

For property (P1), let's assume for simplicity that $b=0$, i.e. $u \ge U$ in $B_1$. By the first two cases above we already know the desired inequality holds (for some small constant $c(n)$) outside a small neighborhood of $L$. It remains to show that it holds (for a possibly smaller constant $c'(n)$) in a small neighborhood of $L$ and for this it suffices to do it in a neighborhood of $0$. We express the inequalities in terms of $\tilde u$, the $\eps$-rescaling of the Hodograph transform of $u$. Let $Q$ denote the quadratic polynomial
$$Q(X):= -\frac 12 |x'|^2  + n (x_n^2+ x^2_{n+1}),$$
and let $$V:=V_{\eps I, 0, 2n \eps}, \quad \quad \tilde V= Q + O(\eps),$$
denote the corresponding subsolution associated to $Q$.
We know $\tilde u \ge 0$ in $B_{3/4}$, and
$$  \tilde u \ge c_0(k_0,n)\, \,  \tau \quad \mbox{outside the cylinder} \quad \mathcal C :=\{ |x'| \le 3/4, \quad |(x_n,x_{n+1})| \le k_0 \}.$$
We choose $k_0(n)$ small, so that the two inequalities on $\tilde u$ above imply 
$$ \tilde u \ge \frac{c_0}{2n} \, \tau  \, (2^{-10} + Q)  \quad \mbox{in $B_{3/4} \setminus B_{1/16}$.}$$

In view of the comparison Lemma \ref{l61} applied to $u$ and translates of $V$, (with $\mu = \eps$) we conclude that 
$$ u \ge V(X+  c_1 \, \tau \, \eps \, e_n) \quad \mbox{in $B_{3/4}$}, \quad c_1:=\ 2 ^{-12} \frac{c_0}{n}.$$
This gives the desired inequality $u \ge U_{(c_1/2) \tau \eps}$ in a small neighborhood of $0$, and property (P1) is proved.

Property (P2) part b) is a direct consequence of Lemma \ref{l61}. Indeed, assume that $X_0=0$ and after a rescaling 
$$\bar u (X) = \delta^{-1/2} u(\delta X)$$
we may assume $\delta=1$, $\bar a = \delta a$, $\bar M= \delta M$, $\bar \xi= \xi$, $\bar \sigma = C(\delta) \sigma$. Then $\bar V \in \mathcal V_{\delta^{-1} \eps}$ and we can apply Lemma \ref{l61} with $\mu = \delta \eps$.

\section{Partial regularity of the free boundary} The blow-up convergence of almost minimizers to minimizing cones (see Proposition \ref{blowup}) and the flatness theorem Theorem \ref{flat_thin} imply that the blow up analysis of minimizing cones and minimizers performed in \cite{DS1} carries through identically to the the case of almost minimizers. In particular we recover the same regularity results as for minimizers up to $C^{1,\alpha}$ regularity. Recall that the free boundary of a minimizer consists of a singular part which is a closed set of Hausdorff dimension $n-3$, and a regular part which has finite $n-1$ dimension and is locally smooth, see \cite{DS1}. In our case this result can be written as follows.

\begin{thm}\label{thm_fin}
Let $u$ be an almost minimizer to $E$ in $B_1$ with exponent $\beta$ and sufficiently small constant $\kappa (\beta)$. Then
$$\mathcal H^{n-1}(F(u) \cap B_{1/2}) \le C, \quad \quad \mbox{with $C$ universal,}$$
and $F(u)$ is $C^{1,\alpha}$ regular outside a closed singular set of Hausdorff dimension $n-3$, for some $\alpha(\beta,n)>0$ small. 
\end{thm}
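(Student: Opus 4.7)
The plan is to transport the partial regularity analysis of \cite{DS1} to the almost-minimizer setting, using the two pillars already in place: Proposition \ref{blowup} (blow-ups at free boundary points are global minimizing cones homogeneous of degree $1/2$) and Theorem \ref{flat_thin} (flatness implies $C^{1,\alpha}$). Decompose $F(u) \cap B_{1/2} = \mathrm{Reg}(u) \cup \mathrm{Sing}(u)$, where $X_0 \in F(u)$ is declared \emph{regular} if some blow-up of $u$ at $X_0$ is a rotation of the trivial cone $U$, and \emph{singular} otherwise. At $X_0 \in \mathrm{Reg}(u)$ the rescaling $u_r(X) := r^{-1/2} u(X_0 + rX)$ is an almost minimizer with constant $\kappa r^\beta$ (so with a constant that only improves under blow-down), and by Proposition \ref{blowup} it is, along a subsequence, arbitrarily $L^\infty$-close to $U$ in $B_1$ after a rotation. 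Since $\kappa$ is assumed small, Theorem \ref{flat_thin} then applies at a sufficiently small scale and yields that $F(u)$ is $C^{1,\alpha}$ in a neighborhood of $X_0$. Hence $\mathrm{Reg}(u)$ is relatively open in $F(u)$, $\mathrm{Sing}(u)$ is closed, and the $C^{1,\alpha}$ conclusion holds on $\mathrm{Reg}(u)$.

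Next I would establish $\mathcal H^{n-1}(F(u) \cap B_{1/2}) \le C$. The $C^{0,1/2}$ bound of Theorem \ref{weak_main_thin}, together with the strong nondegeneracy of Lemma \ref{snd}, give uniform two-sided density estimates: at every $X_0 \in F(u) \cap B_{1/2}$ and every small scale $r$, both $\{u(\cdot,0)>0\} \cap \mathcal B_r(X_0)$ and $\{u(\cdot,0)=0\} \cap \mathcal B_r(X_0)$ have $n$-dimensional measure at least $c\, r^n$. A Vitali covering of $F(u) \cap B_{1/2}$ by balls of such type, combined with a standard relative-perimeter argument based on these two-sided density bounds (as in \cite{AC, DS1}), produces the claimed $\mathcal H^{n-1}$ bound.

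For the bound on the Hausdorff dimension of $\mathrm{Sing}(u)$, I would carry out Federer's dimension reduction. The key compactness input is again Proposition \ref{blowup}: blow-ups of almost minimizers are global minimizing cones, and their free boundaries converge in Hausdorff distance on compact sets. Consequently, tangent cones at singular points are themselves minimizing cones which are singular at the origin, and the class of such singular minimizing cones is closed under rescaling and translation. The classification in \cite{DS1} shows that no singular minimizing cone exists in ambient dimension $n + 1$ below the critical dimension $n^* = 3$, and the standard Federer reduction scheme then upgrades this to $\dim_{\mathcal H}\mathrm{Sing}(u) \le n - n^* = n - 3$.

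The only delicate point—already resolved by Proposition \ref{blowup}—is that blow-up limits of almost minimizers be genuinely \emph{minimizing}, not merely almost-minimizing; this is ensured by the vanishing factor $\kappa r^\beta$ in the almost-minimizer inequality at scale $r$, which passes to an exact minimizing inequality in the limit. Once this is in hand, the entire cone classification and dimension reduction machinery of \cite{DS1} transfers verbatim, and Theorem \ref{thm_fin} follows.
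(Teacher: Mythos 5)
Your proposal follows essentially the same route as the paper: the paper's proof consists precisely in observing that Proposition \ref{blowup} (blow-ups are minimizing cones, with Hausdorff convergence of free boundaries) together with Theorem \ref{flat_thin} lets the blow-up/dimension-reduction analysis of \cite{DS1} for minimizers carry over verbatim, and your regular/singular decomposition, the use of flatness at regular points, and the Federer reduction for the singular set are exactly that argument written out. One caveat on the $\mathcal H^{n-1}$ bound: it does not follow from the two-sided density estimates plus a ``relative perimeter'' argument alone, since density bounds for $\{u(\cdot,0)>0\}$ and $\{u(\cdot,0)=0\}$ only give porosity of $F(u)$, and in the thin functional the surface term is the $n$-dimensional measure of the positivity set on the slit, not its perimeter, so the mechanism of \cite{AC} (where $\Delta u$ is comparable to $\mathcal H^{n-1}$ on the free boundary) does not transfer as stated; the genuinely $(n-1)$-dimensional estimate must be imported from the minimizer theory in \cite{DS1} (energy-comparison/strip-type bounds), which—like the rest of that analysis—goes through for almost minimizers thanks to Proposition \ref{blowup} and Theorem \ref{flat_thin}, exactly as the paper asserts.
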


We also state the result of \cite{DS1} about the regularity of Lipschitz free boundaries for the case of almost minimizers.

\begin{thm}\label{thm2} Let $u$ be an almost minimizer in $B_1$ with exponent $\beta$ and constant $\kappa$. Assume that $0 \in F(u)$ and that $F(u)$ is a Lipschitz graph in the $e_n$ direction  with Lipschitz constant $L.$ Then $F(u) \cap B_{1/2}$ is a $C^{1,\alpha}$ graph, and its $C^{1,\alpha}$ norm is bounded by a constant that depends only on $n,L$ and $\beta$ and $\kappa$.\end{thm}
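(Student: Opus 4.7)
The plan is to reduce the Lipschitz hypothesis to the flatness hypothesis of Theorem~\ref{flat_thin}, after which the conclusion will follow by covering and scaling. The core tool is a blow-up/compactness argument that exploits the convergence of blow-ups to global minimizing cones (Proposition~\ref{blowup}) together with the fact that the Lipschitz character of $F(u)$ in direction $e_n$ is preserved under rescalings.

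First I would observe that for any $X_0 \in F(u) \cap B_{1/2}$ and any small $r>0$, the rescaling $u_{X_0,r}(X) := r^{-1/2} u(X_0 + rX)$ is an almost minimizer with constant $\kappa r^\beta$ and exponent $\beta$, and its free boundary in $B_1$ is still a Lipschitz graph in the $e_n$ direction with constant at most $L$. The goal is then to establish a universal scale $r_0 = r_0(n,L,\beta,\kappa)>0$ such that for every such $X_0$ and every $r \le r_0$ one can find a rotation $R$ of $\R^n$ (extended trivially to $\R^{n+1}$) with $|u_{X_0,r}(X) - U(RX)| \le \tau_0$ in $B_1$, where $\tau_0$ is the smallness threshold of Theorem~\ref{flat_thin}. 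Once this flatness is available, Theorem~\ref{flat_thin} applied to $u_{X_0,r_0}$ yields $C^{1,\alpha}$ regularity of $F(u)$ in a definite neighborhood of $X_0$ with uniform norm bounds, and a finite covering of $F(u)\cap B_{1/2}$ concludes the proof.

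To establish the flatness claim I would argue by contradiction. If it failed, there would exist almost minimizers $u_k$ satisfying the hypotheses, points $X_k \in F(u_k) \cap B_{1/2}$, and radii $r_k \to 0$ for which $v_k := (u_k)_{X_k,r_k}$ is not $\tau_0$-flat relative to any rotated trivial profile. Since the almost-minimizer constant $\kappa r_k^\beta$ tends to zero, the uniform $C^{0,1/2}$ bound of Theorem~\ref{weak_main_thin}, the nondegeneracy of Lemma~\ref{snd}, and Proposition~\ref{blowup} together imply that, along a subsequence, $v_k$ converges locally uniformly to a global minimizer $v_\infty$ of $E$ that is $1/2$-homogeneous, has $0 \in F(v_\infty)$, and whose free boundary is a Lipschitz cone in the $e_n$ direction with constant at most $L$.

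The hard part is the classification step, which is the essential input from \cite{DS1}: a $1/2$-homogeneous global minimizer of $E$ whose free boundary is simultaneously a cone and a Lipschitz graph must have that free boundary be a half-hyperplane (the graph-cone compatibility forces linearity), and the only such minimizer is, up to a rotation in $\R^n \times \{0\}$, the trivial profile $U$. This identification uses the free boundary condition $\partial V/\partial \sqrt t = 1$ (as in Lemma~\ref{l63}) together with a Hopf-type analysis near the Lipschitz slit. Once granted, $v_\infty$ is a rotated copy of $U$, so $v_k$ is $\tau_0$-flat with respect to the corresponding rotation for large $k$, contradicting the choice of $X_k, r_k$. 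The remaining uniformity of constants is quantitative in the compactness argument, and the subsequent application of Theorem~\ref{flat_thin} delivers the stated $C^{1,\alpha}$ bound in terms of $n,L,\beta,\kappa$.
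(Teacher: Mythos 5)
Your overall route---blow up at free boundary points, identify the limit as a global minimizing cone with Lipschitz free boundary, invoke the classification from \cite{DS1} to conclude the cone is a rotation of $U$, and then run Theorem \ref{flat_thin} plus covering and scaling---is exactly the route the paper intends; the paper itself only records that Proposition \ref{blowup}, Theorem \ref{flat_thin} and the cone analysis of \cite{DS1} combine to give Theorem \ref{thm2}. However, two of your steps do not hold up as written.

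First, in the compactness argument the sequence $v_k=(u_k)_{X_k,r_k}$ involves varying functions, centers and scales, and Proposition \ref{blowup} does not apply to it: the $1/2$-homogeneity of the limit comes from the Weiss monotonicity formula applied to a \emph{fixed} almost minimizer at a fixed free boundary point along scales tending to zero. For varying $u_k$, $X_k$, $r_k$ the limit is only a global minimizer with Lipschitz free boundary through the origin, not a priori a cone. A standard repair is to pass to the limit $v_\infty$ (a genuine global minimizer, since $\kappa r_k^\beta\to 0$), blow it down using Weiss monotonicity for $v_\infty$ itself to obtain a Lipschitz minimizing cone, classify that cone via \cite{DS1}, and then use constancy of the Weiss density between the blow-up and blow-down scales (or improvement of flatness applied at large scales) to conclude that $v_\infty$ is itself a rotation of $U$; alternatively, argue pointwise for the fixed $u$ to get qualitative $C^{1,\alpha}$ and run a separate compactness argument in the class of almost minimizers with constants $(\kappa,\beta)$ and Lipschitz constant $L$ to obtain the universal flatness scale $r_0$. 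Second, the parenthetical claim that ``graph-cone compatibility forces linearity'' is false: $\{x_n=L|x'|\}$ is simultaneously a cone and a Lipschitz graph without being a hyperplane. The flatness of Lipschitz \emph{minimizing} cones is precisely the nontrivial classification theorem of \cite{DS1} that you are importing; it relies essentially on minimality (dimension reduction and monotonicity/boundary Harnack arguments), so it should be quoted as the key external input rather than presented as a soft geometric consequence of being a cone and a graph. With these two points corrected, the argument matches the paper's intended proof.
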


\end{document}